\documentclass[12pt]{amsart}
\usepackage{amsmath,amsfonts,amssymb,amsthm}
\usepackage{graphicx,color}
\DeclareMathRadical{\sqrtsign}{symbols}{"70}{largesymbols}{"70}
%

%
%
%
%


%

%
%
\newlength{\figboxwidth}             
\setlength{\figboxwidth}{5.8in}

%

\setlength{\textwidth}{6.0in}
\setlength{\oddsidemargin}{0.25in}
\setlength{\evensidemargin}{0.25in}


%

%




\newcommand{\grad}{\nabla}










\def\@ifundefined#1#2#3%
  {\expandafter\ifx\csname#1\endcsname\relax#2\else#3\fi}

\@ifundefined{theoremstyle}{
}{
\theoremstyle{plain} 
}
\newtheorem{theorem}{Theorem}[section]

\newtheorem{proposition}[theorem]{Proposition}
\newtheorem{lemma}[theorem]{Lemma}

\newtheorem{corollary}[theorem]{Corollary}

\@ifundefined{theoremstyle}{
}{
\theoremstyle{definition} 
}
\newtheorem{definition}[theorem]{Definition}

\newtheorem{remark}[theorem]{Remark}


%

\mathchardef\GG="321D
%



%


\newcommand{\mcc}[1]{{}}

\numberwithin{equation}{section}


\title[Hausdorff dimension across generic dynamical spectra]
{Continuity of Hausdorff dimension across generic dynamical Lagrange and Markov spectra}

\dedicatory{To Jean-Christophe Yoccoz (in memoriam)}

\author{Aline Cerqueira}
\thanks{A.C. was partially supported by CNPq-Brazil. Also, she thanks the hospitality of Coll\`ege de France and IMPA-Brazil during the preparation of this article.}
\address{Aline Cerqueira: 
IMPA, Instituto de Matem\'atica Pura e Aplicada,   
Estrada Dona Castorina, Jardim Bot\^anico, 
Rio de Janeiro, RJ, CEP 22460-320, Brazil.
}
\email{alineagc@gmail.com}

\author{Carlos Matheus}
\thanks{C.M. was temporarily affiliated to the UMI CNRS-IMPA (UMI 2924) during the final stages of preparation of this work and he is grateful to IMPA-Brazil for the hospitality during this period.}
\address{Carlos Matheus: Universit\'e Paris 13, Sorbonne Paris Cit\'e, LAGA, CNRS (UMR 7539), F-93439, Villetaneuse, France}
\email{matheus.cmss@gmail.com}

\author{Carlos Gustavo Moreira}
\thanks{C.G.M. was partially supported by CNPq-Brazil.}
\address{Carlos G. Moreira: 
IMPA, Instituto de Matem\'atica Pura e Aplicada,   
Estrada Dona Castorina, Jardim Bot\^anico, 
Rio de Janeiro, RJ, CEP 22460-320, Brazil.
}
\email{gugu@impa.br}

\keywords{Hausdorff dimension, horseshoes, Lagrange spectrum, Markov spectrum, surface diffeomorphisms}

\date{\today}

\begin{document}

\begin{abstract}
Let $\varphi_0$ be a smooth area-preserving diffeomorphism of a compact surface $M$ and let $\Lambda_0$ be a horseshoe of $\varphi_0$ with Hausdorff dimension strictly smaller than one. Given a smooth function $f:M\to \mathbb{R}$ and a small smooth area-preserving perturtabion $\varphi$ of $\varphi_0$, let $L_{\varphi, f}$, resp. $M_{\varphi, f}$ be the Lagrange, resp. Markov spectrum of asymptotic highest, resp. highest values of $f$ along the $\varphi$-orbits of points in the horseshoe $\Lambda$ obtained by hyperbolic continuation of $\Lambda_0$. 

We show that, for generic choices of $\varphi$ and $f$, the Hausdorff dimension of the sets $L_{\varphi, f}\cap (-\infty, t)$ vary continuously with $t\in\mathbb{R}$ and, moreover, $M_{\varphi, f}\cap (-\infty, t)$ has the same Hausdorff dimension of $L_{\varphi, f}\cap (-\infty, t)$ for all $t\in\mathbb{R}$. 
\end{abstract}

\maketitle



\section{Introduction}\label{s.introduction}

\subsection{Classical Markov and Lagrange spectra}\label{ss.classical-Markov-Lagrange} The origin of the classical Lagrange and Markov spectra lies in Number Theory. 

More precisely, given an irrational number $\alpha>0$, let 
$$k(\alpha):=\limsup_{\substack{p, q\to\infty \\ p, q\in\mathbb{N}}}(q|q\alpha-p|)^{-1}$$
be its best constant of Diophantine approximation. The set 
$$L:=\{k(\alpha):\alpha\in\mathbb{R}-\mathbb{Q}, \alpha>0, k(\alpha)<\infty\}$$ 
consisting of all finite best constants of Diophantine approximations is the so-called \emph{Lagrange spectrum}. 

Similarly, the \emph{Markov spectrum} 
$$M:=\left\{\left(\inf\limits_{(x,y)\in\mathbb{Z}^2-\{(0,0)\}} |q(x,y)|\right)^{-1}: q(x,y)=ax^2+bxy+cy^2, b^2-4ac=1\right\}$$ 
consists of the reciprocal of the minimal values over non-trivial integer vectors $(x,y)\in\mathbb{Z}^2-\{(0,0)\}$ of indefinite binary quadratic forms $q(x,y)$ with unit discriminant. 

These spectra are closely related: it is know that the Lagrange and Markov spectra are closed subsets of $\mathbb{R}$ such that $L\subset M$. The reader can find more informations about the structure of these sets on the classical book \cite{CF} of Cusick and Flahive, but let us just mention that: 
\begin{itemize}
\item Hurwitz theorem says that $\sqrt{5}=\min L$;
\item Markov showed that $L\cap(-\infty, 3)=\{k_1, k_2, \dots\}$ where $k_n$ is an explicit\footnote{E.g., $k_1=\sqrt{5}$, $k_2=2\sqrt{2}$, $k_3=\frac{\sqrt{221}}{5}$, etc.} increasing sequence of quadratic surds converging to $3$; 
\item Hall showed that $L$ contains a half-line and Freiman determined the biggest half-line contained in $L$ (namely, $[c,+\infty)$ where $c=\frac{2221564096+283748\sqrt{462}}{491993569}\simeq 4.52782956\dots$); 
\item Moreira \cite{Mo1} proved that the Hausdorff dimension of $L\cap (-\infty, t)$ varies continuously with $t\in\mathbb{R}$, and, moreover, the sets $L\cap (-\infty, t)$ and $M\cap(-\infty, t)$ have the same Hausdorff dimension for all $t\in\mathbb{R}$.
\end{itemize}

For our purposes, it is worth to point out that the Lagrange and Markov spectra have the following \emph{dynamical} interpretation\footnote{This dynamical interpretation also has a version in terms of the cusp excursions of the geodesic flow on the modular surface $\mathbb{H}/SL(2,\mathbb{Z})$, see \cite{HP} for instance.} in terms of the continued fraction algorithm. 

Denote by $[a_0,a_1,\dots]$ the continued fraction $a_0+\frac{1}{a_1+\frac{1}{\ddots}}$. Let $\Sigma=\mathbb{N}^{\mathbb{Z}}$ the space of bi-infinite sequences of positive integers, $\sigma:\Sigma\to\Sigma$ be the shift dynamics $\sigma((a_n)_{n\in\mathbb{Z}}) = (a_{n+1})_{n\in\mathbb{Z}}$, and let $f:\Sigma\to\mathbb{R}$ be the function
$$f((a_n)_{n\in\mathbb{Z}}) = [a_0, a_1,\dots] + [0, a_{-1}, a_{-2},\dots]$$
Then, 
$$L=\left\{\limsup_{n\to\infty}f(\sigma^n(\underline{\theta})):\underline{\theta}\in\Sigma\right\} \quad \textrm{and} \quad M= \left\{\sup_{n\to\infty}f(\sigma^n(\underline{\theta})):\underline{\theta}\in\Sigma\right\}$$

In the sequel, we consider the natural generalization of this dynamical version of the classical Lagrange and Markov spectra in the context of horseshoes\footnote{I.e., a non-empty compact invariant hyperbolic set of saddle type which is transitive, locally maximal, and not reduced to a periodic orbit (cf. \cite{PT} for more details).} of smooth diffeomorphisms of compact surfaces. In this setting, our main result (cf. Theorem \ref{t.A} below) will be a dynamical analog of the results of \cite{Mo1} (quoted above) on the continuity of Hausdorff dimension across Lagrange and Markov spectra.  

\subsection{Dynamical Markov and Lagrange spectra}\label{ss.dynamical-Markov-Lagrange}

Let $M$ be a surface and consider $\varphi:M\to M$ a $C^2$-diffeomorphism possessing a horseshoe $\Lambda$. 

Given $f:M\to\mathbb{R}$ a $C^r$-function, $r\geq 2$, and $t\in\mathbb{R}$, we define the \emph{dynamical Markov, resp. Lagrange, spectrum} $M_{\varphi, f}$, resp. $L_{\varphi, f}$ as 
$$M_{\varphi, f}=\{m_{\varphi, f}(x): x\in\Lambda\}, \quad \textrm{resp.} \quad L_{\varphi, f}=\{\ell_{\varphi, f}(x): x\in\Lambda\}$$
where  
$$m_{\varphi, f}(x):=\sup\limits_{n\in\mathbb{Z}} f(\varphi^n(x)), \quad \textrm{resp.} \quad \ell_{\varphi, f}(x)=\limsup\limits_{n\to+\infty} f(\varphi^n(x))$$

\begin{remark}\label{r.L<M} An elementary compactness argument (cf. Remark in Section 3 of \cite{MoRo}) shows that  
$$\{\ell_{\varphi, f}(x):x\in A\}\subset\{m_{\varphi, f}(x): x\in A\}\subset f(A)$$
whenever $A\subset M$ is a compact $\varphi$-invariant subset. 
\end{remark}

In this paper, we will be interested in the fractal geometry (Hausdorff dimension) of the sets $M_{\varphi, f}\cap (-\infty, t)$ and $L_{\varphi, f}\cap (-\infty, t)$ as $t\in\mathbb{R}$ varies. 

For this reason, we will also study the fractal geometry of 
$$\Lambda_t:=\bigcap\limits_{n\in\mathbb{Z}}\varphi^{-n}(\{y\in\Lambda: f(y)\leq t\}) = \{x\in\Lambda: m_{\varphi, f}(x)=\sup\limits_{n\in\mathbb{Z}}f(\varphi^n(x))\leq t\}$$ 
for $t\in\mathbb{R}$. 

More precisely, we will consider the following setting (and we refer to Palis-Takens book \cite{PT} for more details). Let us fix a geometrical Markov partition $\{R_a\}_{a\in\mathcal{A}}$ with sufficiently small diameter consisting of rectangles $R_a\simeq I_a^s\times I_a^u$ delimited by compact pieces $I_a^s$, resp. $I_a^u$, of stable, resp. unstable, manifolds of certain points of $\Lambda$. We define the subset $\mathcal{T}\subset\mathcal{A}^2$ of admissible transitions as the subset of pairs $(a_0, a_1)\in\mathcal{A}^2$ such that $\varphi(R_{a_0})\cap R_{a_1}\neq\emptyset$. In this way, the dynamics of $\varphi$ on $\Lambda$ is topologically conjugated to a Markov shift $\Sigma_{\mathcal{T}}\subset\mathcal{A}^{\mathbb{Z}}$ of finite type associated to $\mathcal{T}$. 

\begin{figure}[htb!]
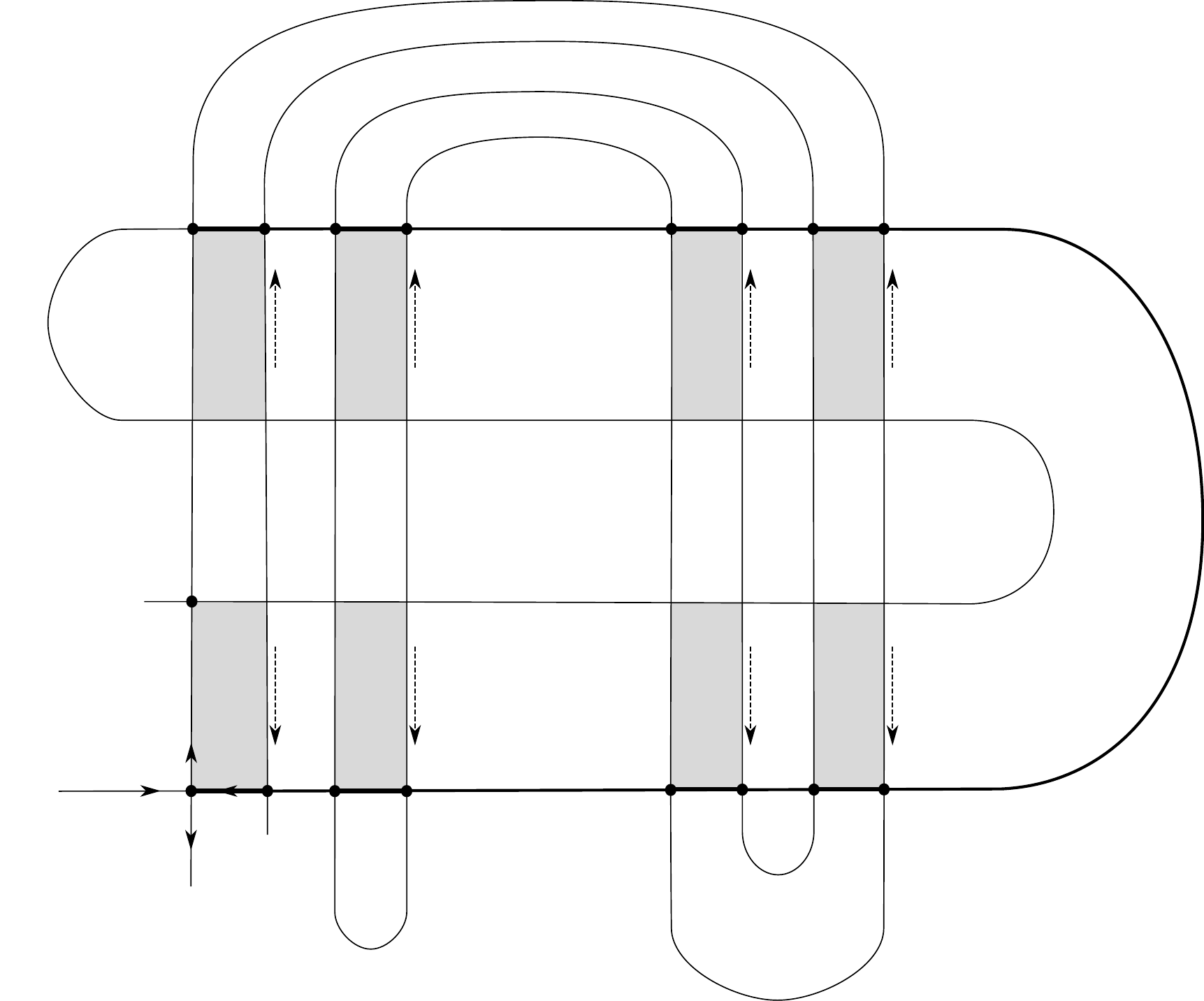
\caption{Geometry of the horseshoe $\Lambda$.}
\end{figure}

Next, we recall that the stable and unstable manifolds of $\Lambda$ can be extended to locally invariant $C^{1+\varepsilon}$-foliations in a neighborhood of $\Lambda$ for some $\varepsilon>0$. Therefore, we can use these foliations to define projections $\pi_a^u:R_a\to I_a^s\times\{i_a^u\}$ and $\pi_a^s: R_a\to \{i_a^s\}\times I_a^u$ of the rectangles into the connected components $I_a^s\times\{i_a^u\}$ and $\{i_a^s\}\times I_a^u$ of the stable and unstable boundaries of $R_a$ where $i_a^u\in\partial I_a^u$ and $i_a^s\in\partial I_a^s$ are fixed arbitrarily. Using these projections, we have the stable and unstable Cantor sets 
$$K^s=\bigcup\limits_{a\in\mathcal{A}}\pi_a^u(\Lambda\cap R_a) \quad \textrm{ and } \quad K^u=\bigcup\limits_{a\in\mathcal{A}}\pi_a^s(\Lambda\cap R_a)$$
associated to $\Lambda$. 

The stable and unstable Cantor sets $K^s$ and $K^u$ are $C^{1+\varepsilon}$-dynamically defined / $C^{1+\varepsilon}$-regular Cantor sets, i.e., the $C^{1+\varepsilon}$-maps 
$$g_s(\pi_{a_1}^u(y))=\pi_{a_0}^u(\varphi^{-1}(y))$$
for $y\in R_{a_1}\cap \varphi(R_{a_0})$
and
$$g_u(\pi_{a_0}^s(z))=\pi_{a_1}^s(\varphi(z))$$
for $z\in R_{a_0}\cap \varphi^{-1}(R_{a_1})$ are expanding of type $\Sigma_\mathcal{T}$ defining $K^s$ and $K^u$ in the sense that 
\begin{itemize}
\item the domains of $g_s$ and $g_u$ are disjoint unions $\bigsqcup\limits_{(a_0,a_1)\in\mathcal{T}} I^s(a_1, a_0)$ and $\bigsqcup\limits_{(a_0, a_1)\in\mathcal{T}} I^u(a_0, a_1)$ where $I^s(a_1, a_0)$, resp. $I^u(a_0, a_1)$, are compact subintervals of $I_{a_1}^s$, resp. $I_{a_0}^u$; 
\item for each $(a_0, a_1)\in\mathcal{T}$, the restrictions $g_s|_{I^s(a_0,a_1)}$ and $g_u|_{I^u(a_0,a_1)}$ are $C^{1+\varepsilon}$ diffeomorphisms onto $I^s_{a_0}$ and $I_{a_0}^u$ with $|Dg_s(t)|>1$, resp. $|Dg_u(t)|>1$, for all $t\in I^s(a_0, a_1)$, resp. $I^u(a_0, a_1)$ (for appropriate choices of the parametrization of $I_a^s$ and $I_a^u$); 
\item $K^s$, resp. $K^u$, are the maximal invariant sets associated to $g_s$, resp. $g_u$, that is, 
$$K^s=\bigcap\limits_{n\in\mathbb{N}}g_s^{-n}\left(\bigcup\limits_{(a_0,a_1)\in\mathcal{T}} I^s(a_1,a_0)\right) \textrm{ and } K^u=\bigcap\limits_{n\in\mathbb{N}}g_u^{-n}\left(\bigcup\limits_{(a_0,a_1)\in\mathcal{T}} I^u(a_0,a_1)\right)$$
\end{itemize} 

Moreover, we will think the intervals $I^u_a$, resp. $I^s_a$, $a\in\mathcal{A}$ inside an abstract line so that it makes sense to say that the interval $I^u_a$, resp. $I^s_a$, is located to the left or to the right of the interval $I^u_b$, resp. $I^s_b$, for $a,b\in\mathcal{A}$ (see \cite{PT}). 

The stable and unstable Cantor sets $K^s$ and $K^u$ are closely related to the geometry of the horseshoe $\Lambda$: for instance, it is well-known that 
$$\textrm{dim}(\Lambda) = \textrm{dim}(K^s)+ \textrm{dim}(K^u):=d_s+d_u$$
where $\textrm{dim}$ stands for the Hausdorff dimension, and, furthermore, $d_s=d_u$ when $\varphi$ is conservative, i.e., $\varphi$ preserves a smooth area form $\omega$. 

Partly motivated by this fact, we will study the subsets $\Lambda_t$ introduced above through its projections  
$$K_t^s=\bigcup_{a\in\mathcal{A}}\pi_a^u(\Lambda_t\cap R_a) \textrm{ and } K_t^u = \bigcup_{a\in\mathcal{A}}\pi_a^s(\Lambda_t\cap R_a)$$
on the stable and unstable Cantor sets of $\Lambda$.

\subsection{Statement of the main result} Using the notations of the previous Subsection, our main result is the following. Let $\varphi_0$ be a smooth conservative diffeomorphism of a surface $M$ possessing a horseshoe $\Lambda_0$ with Hausdorff dimension $\textrm{dim}(\Lambda_0)<1$. Denote by $\mathcal{U}$ a small $C^{\infty}$ neighborhood of $\varphi_0$ in the space $\textrm{Diff}_{\omega}^{\infty}(M)$ of smooth conservative diffeomorphisms of $M$ such that $\Lambda_0$ admits a continuation $\Lambda$ for every $\varphi\in\mathcal{U}$.

\begin{theorem}\label{t.A} If $\mathcal{U}\subset\textrm{Diff}_{\omega}^{\infty}(M)$ is sufficiently small, then there exists a Baire residual subset $\mathcal{U}^{**}\subset \mathcal{U}$ with the following property. For every $\varphi\in\mathcal{U}^{**}$, there exists a $C^r$-open and dense subset $\mathcal{R}_{\varphi,\Lambda}\subset C^r(M,\mathbb{R})$ such that the functions 
$$t\mapsto d_s(t):=\textrm{dim}(K_t^s) \quad \textrm{ and } \quad t\mapsto d_u(t):=\textrm{dim}(K_t^u)$$
are continuous and 
$$d_s(t)+d_u(t) = 2 d_u(t) = \textrm{dim}(L_{\varphi, f}\cap (-\infty, t)) = \textrm{dim}(M_{\varphi, f}\cap (-\infty, t))$$
whenever $f\in \mathcal{R}_{\varphi,\Lambda}$.
\end{theorem}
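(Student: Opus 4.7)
The plan is to follow the strategy of Moreira's proof of the classical continuity theorem (\cite{Mo1}), suitably adapted to the horseshoe setting via Palis--Takens' machinery of $C^{1+\varepsilon}$-regular dynamically defined Cantor sets. Concretely, for each $t \in \mathbb{R}$ I would realize $\Lambda_t$ as an increasing union of genuine sub-horseshoes $\Lambda(t') \subset \Lambda_t$ with $t' \nearrow t$, where $\Lambda(t')$ is the maximal invariant set of $\varphi$ inside the subshift $\Sigma_{\mathcal{T}(t')} \subset \Sigma_{\mathcal{T}}$ obtained by forbidding all finite admissible words whose cylinders contain a point $x$ with $f(x) > t'$. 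The stable/unstable projections $K^s(t') \subset K^s_t$ and $K^u(t') \subset K^u_t$ are then $C^{1+\varepsilon}$-regular Cantor sets whose Hausdorff dimensions are given by Bowen's formula (zeros of the pressure of the appropriate geometric potentials on $\Sigma_{\mathcal{T}(t')}$); by the conservative hypothesis applied to each sub-horseshoe, one has $\dim K^s(t') = \dim K^u(t')$, which forces $d_s(t) = d_u(t)$ in the limit and explains the factor $2d_u(t)$ in the statement.

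Continuity of $t \mapsto d_s(t), d_u(t)$ is the technical heart of the argument. Both functions are non-decreasing, so continuity reduces to ruling out jumps. For left-continuity I would show $\dim K^{s/u}(t') \nearrow \dim K^{s/u}_t$ as $t' \nearrow t$ by a reconstitution-of-Cantor-sets argument in the spirit of \cite{Mo1}: given a regular sub-system of $K^s_t$ with dimension close to the target, refine it to extract words realized by orbits that remain inside $\{f \leq t-\varepsilon\}$ for arbitrarily long times, producing a regular sub-Cantor set of $K^s(t-\varepsilon)$ of comparable dimension. Right-continuity is where the Baire residual condition on $\varphi$ enters: after excluding pathological synchronizations between periodic orbits of $\varphi$ and critical levels of $m_{\varphi,f}$, the ``new'' orbits with $m_{\varphi,f}$-value in $(t, t+\varepsilon]$ pass only through a controlled finite collection of nearly-critical configurations, and their local dimension contribution vanishes as $\varepsilon \to 0$.

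The open and dense subset $\mathcal{R}_{\varphi,\Lambda} \subset C^r(M,\mathbb{R})$ would be defined by transversality-type conditions: $f$ separates the periodic orbits of $\varphi|_{\Lambda}$ up to each prescribed period, attains its global maximum on $\Lambda$ at a single point, and has level sets transverse to the stable/unstable foliations of $\Lambda$. Under these hypotheses, for each $x \in \Lambda_t$ (outside an exceptional set of smaller dimension) there is a unique $n(x) \in \mathbb{Z}$ with $m_{\varphi,f}(x) = f(\varphi^{n(x)}(x))$, so that $m_{\varphi,f}$ decomposes into countably many locally $C^r$ pieces on $\Lambda_t$. Since $\dim \Lambda < 1$ by hypothesis (hence $d_s + d_u < 1$), a Marstrand--Mattila-type projection theorem in its dynamically defined incarnation yields $\dim m_{\varphi,f}(\Lambda_t) = \dim \Lambda_t$ for $f \in \mathcal{R}_{\varphi,\Lambda}$, giving $\dim(M_{\varphi,f} \cap (-\infty,t)) = d_s(t) + d_u(t) = 2d_u(t)$.

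Finally, for the Lagrange spectrum the inclusion $L_{\varphi,f} \subset M_{\varphi,f}$ gives one direction of the dimension comparison. For the reverse, given $x \in \Lambda_{t-\varepsilon}$ I would invoke the specification/shadowing property of the horseshoe to produce a recurrent point $x^* \in \Lambda$ whose orbit shadows arbitrarily long segments of the orbit of $x$ with relative frequency tending to one, so that $\ell_{\varphi,f}(x^*)$ is forced close to $m_{\varphi,f}(x)$; a Cantor-set packing of such $x^*$'s embeds a set of dimension at least $\dim \Lambda_{t-\varepsilon} - \varepsilon$ into $\{\ell_{\varphi,f} \leq t\}$, and letting $\varepsilon \to 0$ (using the left-continuity of $d_s, d_u$) finishes the proof. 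The principal obstacle is exactly the right-continuity of $d_s(t), d_u(t)$: the combinatorics of $\Sigma_{\mathcal{T}(t)}$ can change abruptly when $t$ crosses the $m_{\varphi,f}$-value of a short periodic orbit, and controlling the dimension across such jumps --- especially when the forbidden word is short or when the relevant orbit is close to a homoclinic tangency --- is precisely what the genericity of $\varphi$ in the residual subset $\mathcal{U}^{**}$ is designed to guarantee.
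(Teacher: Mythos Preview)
You have inverted the roles of the two semicontinuity directions and misidentified where the residual set $\mathcal{U}^{**}$ enters. In the paper, \emph{upper} semicontinuity of $D_u(t)$ (your ``right-continuity'') is the trivial direction: it follows in two lines from the submultiplicativity of the counting functions $N_{\textbf{u}}(t,r)$ together with compactness ($\mathcal{C}_{\textbf{u}}(t_0,m)=\bigcap_{t>t_0}\mathcal{C}_{\textbf{u}}(t,m)$), and requires no genericity whatsoever --- neither on $\varphi$ nor on $f$. Your scenario of ``pathological synchronizations between periodic orbits and critical levels'' causing jumps from the right simply does not arise.

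The genuine difficulty is \emph{lower} semicontinuity, and here your sketch (``reconstitution of Cantor sets'') misses the key mechanism. One must build, for each $t$ and $\eta>0$, a \emph{complete} subshift $\Sigma(\mathcal{B})\subset\Sigma_{t-\delta}$ (strict level drop!) with $\dim K^u(\Sigma(\mathcal{B}))>(1-\eta)D_u(t)$. The issue is that arbitrary concatenation of words that individually meet $K^u_t$ can produce orbits whose $f$-values exceed $t$ near the concatenation points. The paper solves this by a combinatorial ``good position'' argument: using only that $\nabla f$ is never perpendicular to $E^s$ or $E^u$ on $\Lambda$ (this is the actual definition of $\mathcal{R}_{\varphi,\Lambda}$, much simpler than your proposed conditions on periodic orbits), one can select words with neighbours on both sides in $K^u_t$ at certain positions, so that monotonicity of $f$ along stable/unstable leaves forces the concatenated orbit strictly below level $t$. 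This is the technical heart and it uses no genericity of $\varphi$.

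The residual set $\mathcal{U}^{**}$ enters only through the dimension formula $\dim(g(\widetilde{\Lambda}))=\dim(\widetilde{\Lambda})$ for subhorseshoes $\widetilde{\Lambda}\subset\Lambda$ and $g\in\mathcal{R}_{\varphi,\Lambda}$ (quoted from \cite{Mo}), which is what links $\dim\Lambda_t$ to $\dim(L_{\varphi,f}\cap(-\infty,t))$. This is not a Marstrand--Mattila statement about generic projections of a fixed set; it is a statement about a fixed map $f$ applied to subhorseshoes of a \emph{generic} $\varphi$, and is logically independent of the continuity question.
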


\begin{remark}\label{r.tA} Our proof of Theorem \ref{t.A} shows that $d_s(t)$, resp. $d_u(t)$, coincide with the box counting dimension of $K^s_t$, resp. $K^u_t$. 
\end{remark}

\begin{remark}\label{r.tA'} The first part of Theorem \ref{t.A} (i.e., the continuity of $d_s(t)$ and $d_u(t)$) still holds in the broader context of non-conservative diffeomorphisms: cf. Remark \ref{r.general-continuity} below. On the other hand, our proof of the second part of Theorem \ref{t.A} crucially relies on the conservativeness assumptions. 
\end{remark}

\section{Proof of the main result} 

In plain terms, our strategy of proof of Theorem \ref{t.A} is very similar to \cite{Mo1}: we want to approximate from inside $K^u_t$ and $K^s_t$ by dynamically defined Cantor sets, resp. $\Lambda_t$ by subhorseshoes of $\Lambda$ without losing too much Hausdorff dimension in such a way that the values of $f$ on these approximating objects are controlled from above. 

\subsection{Some preliminaries} Recall that the geometrical description of $\Lambda$ in terms of the Markov partition $\{R_a\}_{a\in\mathcal{A}}$ has a combinatorial counterpart in terms of the Markov shift $\Sigma=\Sigma_{\mathcal{T}}\subset \mathcal{A}^{\mathbb{Z}}$. In particular, we have a homeomorphism $h:\Lambda\to\Sigma$ conjugating $\varphi$ to the shift map $\sigma((a_n)_{n\in\mathbb{Z}})=(a_{n+1})_{n\in\mathbb{Z}}$ and, moreover, we can use $h$ to transfer the function $f$ from $\Lambda$ to a function (still denoted $f$) on $\Sigma$. In this setting, $h(\Lambda_t)=\Sigma_t$ where 
$$\Sigma_t=\{\theta\in\Sigma: \sup\limits_{n\in\mathbb{Z}} f(\sigma^n(\theta))\leq t\}.$$

Given an admissible finite sequence $\alpha=(a_1,\dots, a_n)\in\mathcal{A}^n$ (i.e., $(a_i, a_{i+1})\in\mathcal{T}$ for all $i=1, \dots, n-1$), we define 
$$I^u(\alpha)=\{x\in K^u: g_u^i(x)\in I^u(a_i,a_{i+1}) \, \forall \, i=1, \dots, n-1\}.$$ 
Similarly, given an admissible finite sequence $\alpha=(a_1,\dots, a_n)\in\mathcal{A}^n$, we define 
$$I^s(\alpha^T)=:\{y\in K^s: g_s^i(y)\in I^s(a_i,a_{i-1}) \, \forall \, i=2, \dots, n\}.$$
Here, $\alpha^T=(a_n,\dots, a_1)$ denotes the \emph{transpose} of $\alpha$.  

We say that the \emph{unstable size} $s^{(\textbf{u})}(\alpha)$ of $\alpha$ is the lenght $|I^u(\alpha)|$ of the interval $I^u(\alpha)$ and the \emph{unstable scale} of $\alpha$ is $r^{(\textbf{u})}(\alpha)=\lfloor\log(1/s^{(\textbf{u})}(\alpha))\rfloor$. Similarly, the \emph{stable size} $s^{(\textbf{s})}(\alpha)$ is $s^{(\textbf{s})}(\alpha)=|I^s(\alpha^T)|$ and the \emph{stable scale} of $\alpha$ is $r^{(\textbf{s})}(\alpha)=\lfloor\log(1/s^{(\textbf{s})}(\alpha))\rfloor$.

\begin{remark}\label{r.bounded-distortion} In our context of $C^{1+\varepsilon}$-dynamically defined Cantor sets, we can relate the unstable and stable sizes of $\alpha$ to its length as a word in the alphabet $\mathcal{A}$ via the so-called \emph{bounded distortion property} saying that there exists a constant $c_1=c_1(\varphi,\Lambda)>0$ such that:
$$e^{-c_1}\leq\frac{|I^u(\alpha\beta)|}{|I^u(\alpha)|\cdot |I^u(\beta)|}\leq e^{c_1}, \quad e^{-c_1}\leq\frac{|I^s((\alpha\beta)^T)|}{|I^s((\alpha)^T)|\cdot |I^s((\beta)^T)|}\leq e^{c_1}$$
We refer the reader to \cite[p. 59]{PT} for more details.
\end{remark}

\begin{remark}\label{r.stable-unstable-sizes} In our context of horseshoes of conservative $C^2$ surface diffeomorphisms, there exists a constant $c_2=c_2(\varphi,\Lambda)>0$ such that the stable and unstable sizes of any word $\alpha=(a_1,\dots,a_n)$ in the alphabet $\mathcal{A}$ satisfy 
$$e^{-c_2} |I^s(\alpha^T)|\leq |I^u(\alpha)|\leq e^{c_2} |I^s(\alpha^T)|.$$
Indeed, this happens because $\varphi$ maps the unstable rectangle $R^u(\alpha):=\{x\in R_{a_0}: f^i(x)\in R_{a_i} \forall\, 1\leq i\leq n\}$ diffeomorphically onto the stable rectangle $R^s(\alpha^T):=\{y\in R_{a_n}: f^j(y)\in R_{a_{n-j}} \forall\, 1\leq j\leq n\}$, $\varphi$ preserves areas, and the areas of $R^u(\alpha)$ and $R^s(\alpha^T)$ are comparable to $|I^u(\alpha)|$ and $|I^s(\alpha^T)|$ up to  multiplicative factors. 
\end{remark}

Given $r\in\mathbb{N}$, we define 
$$P^{(\textbf{u})}_r=\{\alpha=(a_1,\dots, a_n)\in\mathcal{A}^n \textrm{ admissible }: r^{(\textbf{u})}(\alpha)\geq r \textrm { and } r^{(\textbf{u})}(a_1\dots a_{n-1})<r\},$$
resp. 
$$P^{(\textbf{s})}_r=\{\alpha=(a_1,\dots, a_n)\in\mathcal{A}^n \textrm{ admissible }: r^{(\textbf{s})}(\alpha)\geq r \textrm { and } r^{(\textbf{s})}(a_1\dots a_{n-1})<r\},$$
and we consider the sets 
$$\mathcal{C}_\textbf{u}(t,r)=\{\alpha\in P_r^{(\textbf{u})}: I^u(\alpha)\cap K_t^u\neq\emptyset\},$$
resp.
$$\mathcal{C}_\textbf{s}(t,r)=\{\alpha\in P_r^{(\textbf{s})}: I^s(\alpha^T)\cap K_t^s\neq\emptyset\}$$ 
whose cardinalities are denoted by $N_\textbf{u}(t,r):=\#\mathcal{C}_\textbf{u}(t,r)$ and $N_\textbf{s}(t,r):=\#\mathcal{C}_\textbf{s}(t,r)$.

\begin{lemma}\label{l.Nu-submultiplicative} For each $t\in\mathbb{R}$, the sequences $N_\textbf{u}(t,r)$, $r\in\mathbb{N}$, and $N_\textbf{s}(t,r)$, $r\in\mathbb{N}$, are essentially submultiplicative in the sense that there exists a constant $c_3=c_3(\varphi, \Lambda)\in\mathbb{N}$ such that 
$$N_{\textbf{u}}(t,n+m)\leq \#\mathcal{A}^{c_3}\cdot N_\textbf{u}(t,n)\cdot N_\textbf{u}(t,m)$$
and 
$$N_\textbf{s}(t,n+m)\leq \#\mathcal{A}^{c_3}\cdot N_\textbf{s}(t,n)\cdot N_\textbf{s}(t,m)$$
for all $n, m\in\mathbb{N}$.
\end{lemma}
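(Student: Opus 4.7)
I will prove the unstable estimate on $N_{\textbf{u}}$; the estimate for $N_{\textbf{s}}$ is obtained by the identical argument applied either to $\varphi^{-1}$ (under which the roles of stable and unstable Cantor sets are interchanged) or symbolically with $s$ in place of $u$ throughout. The plan is to construct a map
$$\Phi\colon \mathcal{C}_{\textbf{u}}(t,n+m)\longrightarrow \mathcal{C}_{\textbf{u}}(t,n)\times \mathcal{C}_{\textbf{u}}(t,m)$$
with fibers of cardinality at most $\#\mathcal{A}^{c_3}$ for some constant $c_3=c_3(\varphi,\Lambda)\in\mathbb{N}$; this immediately yields the desired inequality.

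\textbf{Definition of $\Phi$.} Given $\alpha=(a_1,\dots,a_k)\in\mathcal{C}_{\textbf{u}}(t,n+m)$, fix a witness $p\in\Lambda_t\cap R_{a_1}$ with $\pi^s_{a_1}(p)\in I^u(\alpha)$; via the symbolic conjugacy $h\colon \Lambda\to\Sigma$, the sequence $\theta:=h(p)\in\Sigma_t$ agrees with $\alpha$ on its first $k$ positions and extends it admissibly in both directions. Let $j$ be the smallest index such that $\beta_1:=(a_1,\dots,a_j)$ belongs to $P^{(\textbf{u})}_n$; this $j$ exists and satisfies $j\le k$, and $p$ itself witnesses $\beta_1\in \mathcal{C}_{\textbf{u}}(t,n)$. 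By $\sigma$-invariance of $\Sigma_t$, the shifted sequence $\sigma^{j-1}\theta$ also lies in $\Sigma_t$, and its unique $P^{(\textbf{u})}_m$-prefix $\beta_2=(a_j,a_{j+1},\dots,a_{j+l-1})$ (where any letters past position $k$ are supplied from $\theta$ itself) is witnessed by $\varphi^{j-1}(p)\in\Lambda_t$ to belong to $\mathcal{C}_{\textbf{u}}(t,m)$. Set $\Phi(\alpha):=(\beta_1,\beta_2)$.

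\textbf{Bounding the fibers; main difficulty.} Let $\gamma=(a_1,\dots,a_{j+l-1})$ be the overlap-concatenation of $\beta_1$ and $\beta_2$ at the shared letter $a_j$. Remark~\ref{r.bounded-distortion} implies $|I^u(\gamma)|$ is comparable to $|I^u(\beta_1)|\cdot|I^u(\beta_2)|$ up to a multiplicative constant depending only on $c_1$ and $\min_a|I^u_a|$. Since $r^{(\textbf{u})}(\beta_1)\in[n,n+C')$ and $r^{(\textbf{u})}(\beta_2)\in[m,m+C')$ for a uniform scale-jump constant $C'=O(1)$, this forces $r^{(\textbf{u})}(\gamma)=n+m+O(1)$. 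Comparing with $r^{(\textbf{u})}(\alpha)\in[n+m,n+m+C')$ and using that the uniform expansion $|Dg_u|\ge\lambda>1$ makes each added letter raise the scale by at least $\log\lambda$, the lengths $k$ and $j+l-1$ must differ by at most a constant $c_3'=c_3'(\varphi,\Lambda)$. Consequently, given $(\beta_1,\beta_2)$, the word $\alpha$ is either the unique $P^{(\textbf{u})}_{n+m}$-prefix of $\gamma$, or an admissible extension of $\gamma$ by at most $c_3'$ letters; in either case $|\Phi^{-1}(\beta_1,\beta_2)|\le \#\mathcal{A}^{c_3}$ for an appropriate $c_3$. The delicate point is the definition of $\beta_2$: when $j+l-1>k$, the forward extension of $\alpha$ within itself does not suffice to reach scale $m$, and one must invoke the witness $\theta$ to continue past position $k-1$; this is legitimate precisely because $\Sigma_t$ is $\sigma$-invariant.
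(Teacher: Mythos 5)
Your proof is correct, but it takes a genuinely different route from the paper's. The paper proceeds by a covering argument in the ``forward'' direction: using shift-invariance of $\Sigma_t$, every point of $K_t^u$ lies in some $I^u(\alpha\beta\gamma)$ with $\alpha\in\mathcal{C}_{\textbf{u}}(t,n)$, $\beta\in\mathcal{C}_{\textbf{u}}(t,m)$ and a padding suffix $\gamma\in\mathcal{A}^{c_3}$; the role of $\gamma$ is precisely to force $r^{(\textbf{u})}(\alpha\beta\gamma)\ge n+m$, so that each covering interval is contained in a unique $P^{(\textbf{u})}_{n+m}$-interval, and the count follows. You instead go in the ``backward'' direction: you build a map $\Phi\colon\mathcal{C}_{\textbf{u}}(t,n+m)\to\mathcal{C}_{\textbf{u}}(t,n)\times\mathcal{C}_{\textbf{u}}(t,m)$ by cutting $\alpha$ at its first $P^{(\textbf{u})}_n$-prefix and reading off the next $P^{(\textbf{u})}_m$-prefix along a chosen witness $\theta\in\Sigma_t$, and then argue that the fibers of $\Phi$ are bounded. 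Both approaches ultimately rest on the same ingredients (bounded distortion, $\sigma$-invariance of $\Sigma_t$, uniform bounds on the per-letter change of scale); the paper's padding trick avoids the scale-comparison arithmetic you need for the fiber bound, while your argument avoids the need to separately verify that a scale-$\ge n+m$ cover controls $N_{\textbf{u}}(t,n+m)$.

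Two small remarks. First, the phrase ``each added letter raises the scale by at least $\log\lambda$'' is not quite literally true letter-by-letter for suffix extensions once one accounts for the floor in $r^{(\textbf{u})}$ and the bounded-distortion constant; the correct statement is that extending by $q$ letters raises the scale by at least $q\log\lambda-O(1)$, which is still enough to conclude $|k-(j+l-1)|=O(1)$, so the gap is cosmetic. Second, the one-letter overlap created by shifting by $j-1$ rather than $j$ forces you to invoke the bound $|I^u(\gamma)|\sim |I^u(\beta_1)|\cdot|I^u(\beta_2)|/|I^u(a_j)|$; shifting by $j$ (no overlap) would make the bounded-distortion step cleaner, as the paper does with its non-overlapping concatenation $\alpha\beta\gamma$.
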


\begin{proof} By symmetry (i.e., exchanging the roles of $\varphi$ and $\varphi^{-1}$), it suffices to show that the sequence $N_u(t,r)$, $r\in\mathbb{N}$, is essentially submultiplicative. 

Since the expanding map $g_u$ defining the Cantor set $K^u$ is $C^{1+\varepsilon}$, the usual bounded distortion property (cf. Remark \ref{r.bounded-distortion}) ensures the existence of a constant $c_1=c_1(\varphi,\Lambda)$ such that the sizes of the intervals $I^u(.)$ behave essentially submultiplicatively under admissible concatenations of words, i.e., 
$$|I^u(\alpha\beta \gamma)|\leq e^{2c_1}|I^u(\alpha)|\cdot |I^u(\beta)|\cdot |I^u(\gamma)|$$
for all $\alpha$, $\beta$, $\gamma$ finite words such that the concatenation $\alpha\beta \gamma$ is admissible. 

Next, we observe that, if $\gamma=\gamma_1\dots\gamma_c$ is a finite word in the letters $\gamma_i\in\mathcal{A}$, $1\leq i\leq c$, then 
$$|I^u(\gamma)|\leq \frac{1}{\mu^c}\max_{a\in\mathcal{A}}|I^u_a|$$
where $\mu=\mu_{\textbf{u}}:=\min |Dg_u|>1$, 

Now, we note that, for each $c\in\mathbb{N}$, one can cover $K_t^u$ with $\leq\#\mathcal{A}^{c}\cdot N_\textbf{u}(t,n)\cdot N_\textbf{u}(t,m)$ intervals $I^u(\alpha\beta \gamma)$ with $\alpha\in \mathcal{C}_\textbf{u}(t,n)$, $\beta\in \mathcal{C}_\textbf{u}(t,m)$, $\gamma\in\mathcal{A}^{c}$ and $\alpha\beta \gamma$ admissible.

Therefore, by taking\footnote{In what follows, $\lceil x\rceil$ denotes the smallest integer greater than or equal to $x\in\mathbb{R}$.} 
$$c_3=c_3(\varphi,\Lambda)=\lceil\frac{\log(e^{2c_1}\max_{a\in\mathcal{A}}|I^u_a|)}{\log\mu}\rceil\in\mathbb{N},$$
it follows that we can cover $K_t^u$ with $\leq\#\mathcal{A}^{c_3}\cdot N_\textbf{u}(t,n)\cdot N_\textbf{u}(t,m)$ intervals $I^u(\alpha\beta \gamma)$ whose scales satisfy 
$$r^{(\textbf{u})}(\alpha\beta \gamma)\geq r^{(\textbf{u})}(\alpha)+r^{(\textbf{u})}(\beta)\geq n+m$$
whenever $\alpha\in \mathcal{C}_\textbf{u}(t,n)$, $\beta\in \mathcal{C}_\textbf{u}(t,m)$, $\gamma\in\mathcal{A}^{c_3}$ and $\alpha\beta \gamma$ is admissible. Hence, we conclude that 
$$N_\textbf{u}(t,n+m)\leq \#\mathcal{A}^{c_3}\cdot N_\textbf{u}(t,n)\cdot N_\textbf{u}(t,m)$$
for all $n, m\in\mathbb{N}$.
\end{proof}

From this lemma we get the following immediate corollary:

\begin{corollary}\label{c.Du-definition}For each $t\in\mathbb{R}$, the limits
$$\lim\limits_{m\to\infty}\frac{1}{m}\log N_\textbf{u}(t, m) \quad \textrm{ and } \quad \lim\limits_{m\to\infty}\frac{1}{m}\log N_\textbf{s}(t, m)$$
exist and they coincide with 
$$D_u(t):=\inf\limits_{m\in\mathbb{N}}\frac{1}{m}\log(\#\mathcal{A}^{c_3}\cdot N_\textbf{u}(t,m)) \quad \textrm{ and } \quad D_s(t):=\inf\limits_{m\in\mathbb{N}}\frac{1}{m}\log(\#\mathcal{A}^{c_3} \cdot N_\textbf{u}(t,m))$$
where $c_3=c_3(\varphi,\Lambda)\in\mathbb{N}$ is the constant introduced in Lemma \ref{l.Nu-submultiplicative}.
\end{corollary}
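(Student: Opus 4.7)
The plan is to derive this as a direct application of Fekete's subadditive lemma after converting the essentially submultiplicative inequality from Lemma \ref{l.Nu-submultiplicative} into a genuinely subadditive inequality.

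More precisely, I would set $a_m := \log(\#\mathcal{A}^{c_3}\cdot N_\textbf{u}(t,m))$ and observe that multiplying both sides of the estimate
$$N_\textbf{u}(t,n+m) \leq \#\mathcal{A}^{c_3}\cdot N_\textbf{u}(t,n)\cdot N_\textbf{u}(t,m)$$
by $\#\mathcal{A}^{c_3}$ and taking logarithms gives $a_{n+m} \leq a_n + a_m$ for all $n,m\in\mathbb{N}$. Fekete's lemma then guarantees that $\lim_{m\to\infty} a_m/m$ exists and equals $\inf_{m\in\mathbb{N}} a_m/m$. Since $\frac{1}{m}\log(\#\mathcal{A}^{c_3})\to 0$ as $m\to\infty$, subtracting this vanishing term yields
$$\lim_{m\to\infty}\frac{1}{m}\log N_\textbf{u}(t,m) \;=\; \inf_{m\in\mathbb{N}}\frac{1}{m}\log(\#\mathcal{A}^{c_3}\cdot N_\textbf{u}(t,m)) \;=\; D_u(t),$$
as claimed.

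An identical argument, using the second inequality of Lemma \ref{l.Nu-submultiplicative}, handles the sequence $N_\textbf{s}(t,m)$ and produces $D_s(t)$. Since the statement is just the standard passage from submultiplicativity to a well-defined exponential growth rate, there is no real obstacle; the only point worth noting is that one cannot replace the $\inf$ by a simple $\lim$ of $\frac{1}{m}\log N_\textbf{u}(t,m)$ alone without absorbing the constant factor $\#\mathcal{A}^{c_3}$, which is precisely why the infimum in the definitions of $D_u(t)$ and $D_s(t)$ is taken over the quantities $\#\mathcal{A}^{c_3}\cdot N_\textbf{u}(t,m)$ rather than $N_\textbf{u}(t,m)$ themselves.
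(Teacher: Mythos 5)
Your proof is correct and is exactly the argument the paper has in mind: the paper presents this as an ``immediate corollary'' of Lemma~\ref{l.Nu-submultiplicative}, and what makes it immediate is precisely the Fekete passage you spell out (multiply through by $\#\mathcal{A}^{c_3}$ to get genuine subadditivity, apply Fekete, then discard the vanishing $\frac{1}{m}\log\#\mathcal{A}^{c_3}$ term). The only thing worth flagging is a typo in the paper's displayed formula for $D_s(t)$, which writes $N_\mathbf{u}$ where it should be $N_\mathbf{s}$; your treatment of the stable side correctly uses $N_\mathbf{s}$.
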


\begin{remark}\label{r.Du} It is not hard to check that $D_u(t)$, resp. $D_s(t)$, coincides with the limit capacity (box counting dimension) of $K_t^u$, resp. $K_t^s$. 
\end{remark}

\subsection{Upper semicontinuity of $D_u(t)$ and $D_s(t)$} In this subsection we show the upper semicontinuity of the limit capacities of $K^u_t$ and $K^s_t$: 
\begin{proposition}\label{p.Du-upper-sc}Using the notation of Corollary \ref{c.Du-definition}, the functions $t\mapsto D_u(t)$ and $t\mapsto D_s(t)$ are upper semicontinuous. 
\end{proposition}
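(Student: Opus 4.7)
The plan is to reduce the statement, by the symmetry $\varphi \leftrightarrow \varphi^{-1}$, to upper semicontinuity of $t \mapsto D_u(t)$, and then to exploit the representation $D_u(t) = \inf_{m\in\mathbb{N}} \frac{1}{m}\log(\#\mathcal{A}^{c_3}\cdot N_\textbf{u}(t,m))$ from Corollary \ref{c.Du-definition}. The key observation is that, for each \emph{fixed} $m$, the integer-valued function $t \mapsto N_\textbf{u}(t,m)$ is non-decreasing and \emph{right-continuous} at every $t_0 \in \mathbb{R}$; once this is established, choosing $m$ so that $\frac{1}{m}\log(\#\mathcal{A}^{c_3}\cdot N_\textbf{u}(t_0,m))$ approximates $D_u(t_0)$ to within $\varepsilon$, together with the trivial monotonicity of $D_u$ on the left of $t_0$, immediately yields the required upper semicontinuity.

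The key steps I would carry out are the following. Fix $t_0 \in \mathbb{R}$ and $\varepsilon > 0$, and pick $m \in \mathbb{N}$ with $\frac{1}{m}\log(\#\mathcal{A}^{c_3}\cdot N_\textbf{u}(t_0,m)) < D_u(t_0) + \varepsilon$. The main claim to prove is that there exists $\delta > 0$ with $\mathcal{C}_\textbf{u}(t,m) = \mathcal{C}_\textbf{u}(t_0,m)$ for every $t \in [t_0, t_0+\delta]$. I would argue by contradiction: if the claim failed then, since $P_m^{(\textbf{u})}$ is finite, a pigeonhole argument would produce a single admissible word $\alpha$ and a sequence $t_n \downarrow t_0$ with $\alpha \in \mathcal{C}_\textbf{u}(t_n,m) \setminus \mathcal{C}_\textbf{u}(t_0,m)$ for all $n$. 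Choosing $x_n \in I^u(\alpha) \cap K^u_{t_n}$ and lifting it via the Markov partition to a point $p_n \in \Lambda_{t_n}$ with $\pi^s_\bullet(p_n) = x_n$, compactness of $\Lambda$ provides a subsequential limit $p_n \to p \in \Lambda$. Continuity of $f$ and $\varphi$ allows us to pass to the limit in the family of inequalities $f(\varphi^k(p_n)) \leq t_n$ valid for every $k \in \mathbb{Z}$, yielding $f(\varphi^k(p)) \leq t_0$ for every $k$, i.e.\ $m_{\varphi,f}(p) \leq t_0$ and thus $p \in \Lambda_{t_0}$; continuity of the projection $\pi^s_\bullet$ combined with closedness of $I^u(\alpha)$ then forces $\pi^s_\bullet(p) \in I^u(\alpha) \cap K^u_{t_0}$, contradicting $\alpha \notin \mathcal{C}_\textbf{u}(t_0,m)$. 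Granted the claim, for $t \in [t_0, t_0+\delta]$ one has
$$D_u(t) \;\leq\; \frac{1}{m}\log(\#\mathcal{A}^{c_3}\cdot N_\textbf{u}(t,m)) \;=\; \frac{1}{m}\log(\#\mathcal{A}^{c_3}\cdot N_\textbf{u}(t_0,m)) \;<\; D_u(t_0)+\varepsilon,$$
while for $t < t_0$ the inclusion $K^u_t \subset K^u_{t_0}$ trivially gives $D_u(t) \leq D_u(t_0)$; hence $\limsup_{t\to t_0} D_u(t) \leq D_u(t_0) + \varepsilon$, and $\varepsilon \to 0$ finishes the argument. The same reasoning with $u$ replaced by $s$ handles $D_s$.

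The main obstacle is the compactness/limit step in the contradiction argument, i.e.\ verifying that the subsequential limit $p$ really belongs to $\Lambda_{t_0}$ and that $\pi^s_\bullet(p)$ really lies in $I^u(\alpha)$. The first part hinges on the fact that the bound $f(\varphi^k(p_n)) \leq t_n$ is \emph{pointwise} in $k \in \mathbb{Z}$, so one does not need uniformity in $k$ to pass to the limit and still conclude $\sup_{k\in\mathbb{Z}} f(\varphi^k(p)) \leq t_0$. The second part uses that $I^u(\alpha)$ is a finite intersection of preimages under iterates of $g_u$ of the closed intervals $I^u(a_i,a_{i+1})$ (hence closed), together with continuity of the holonomy-type projection $\pi^s_\bullet$, which is a standard consequence of the $C^{1+\varepsilon}$-regularity of the invariant foliations in the sense of \cite{PT}. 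Everything else in the argument is a formal manipulation of the defining infimum of $D_u(t)$ and of the monotonicity $K^u_{t} \subset K^u_{t'}$ for $t \leq t'$.
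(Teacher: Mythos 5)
Your proof is correct and takes essentially the same route as the paper: reduce by the $\varphi \leftrightarrow \varphi^{-1}$ symmetry, exploit the infimum representation $D_u(t)=\inf_m\frac{1}{m}\log(\#\mathcal{A}^{c_3}N_{\mathbf{u}}(t,m))$, and show by a compactness argument that for each fixed $m$ the finite set $\mathcal{C}_{\mathbf{u}}(t,m)$ stabilizes to $\mathcal{C}_{\mathbf{u}}(t_0,m)$ for $t$ slightly above $t_0$ (the paper phrases this as $\mathcal{C}_{\mathbf{u}}(t_0,m)=\bigcap_{t>t_0}\mathcal{C}_{\mathbf{u}}(t,m)$ and then picks $t(m)>t_0$ realizing it). The only cosmetic difference is that you run the argument directly (fix $\varepsilon$, find $\delta$) whereas the paper argues by contradiction, and you spell out the subsequential-limit details that the paper summarizes as ``by compactness.''
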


\begin{proof} By symmetry (i.e., exchanging $\varphi$ by $\varphi^{-1}$), our task consists to prove that, for each $t_0\in\mathbb{R}$, the values $D_u(t)$ converge to $D_u(t_0)$ as $t>t_0$ approaches $t_0$. 

By contradiction, suppose that this is not the case. Then, there exists $\eta>0$ such that 
$$D_u(t)>D_u(t_0)+\eta$$
for all $t>t_0$. By Corollary \ref{c.Du-definition}, this implies that 
$$\frac{1}{m}\log(\#\mathcal{A}^{c_3}\cdot N_{\textbf{u}}(t,m))>D_u(t_0)+\eta$$
for all $t>t_0$ and $m\in\mathbb{N}$.

On the other hand, by compactness, for each $m\in\mathbb{N}$, one has 
$$\mathcal{C}_{\textbf{u}}(t_0,m)=\bigcap_{t>t_0}\mathcal{C}_{\textbf{u}}(t,m).$$
In particular, for each $m\in\mathbb{N}$, there exists $t(m)>t_0$ such that $N_{\textbf{u}}(t(m),m)=N_{\textbf{u}}(t_0,m)$. 

Therefore, by putting these facts together, we would deduce that, for each $m\in\mathbb{N}$, 
$$\frac{1}{m}\log(\#\mathcal{A}^{c_3}\cdot N_{\textbf{u}}(t_0,m))=\frac{1}{m}\log(\#\mathcal{A}^{c_3}\cdot N_{\textbf{u}}(t(m),m))>D_u(t_0)+\eta$$
Hence, by letting $m\to\infty$, we would conclude that 
$$D_u(t_0)>D_u(t_0)+\eta,$$
a contradiction.
\end{proof}

\subsection{Construction of $\mathcal{R}_{\varphi,\Lambda}$}\label{ss.R-construction} Fix $r\geq 1$. We define 
$$\mathcal{R}_{\varphi, \Lambda}=\{f\in C^r(M,\mathbb{R}): \grad f(x) \textrm{ is not perpendicular to } E^s_x \textrm{  or } E^u_x \textrm{ for all } x\in\Lambda\}.$$ In other terms, $\mathcal{R}_{\varphi, \Lambda}$ is the class of $C^r$-functions $f:M\to\mathbb{R}$ that are locally monotone along stable and unstable directions. 

\begin{proposition}\label{p.R-generic} Fix $r\geq 2$. If the horseshoe $\Lambda$ has Hausdorff dimension $$\textrm{dim}(\Lambda)<1,$$ then $\mathcal{R}_{\varphi, \Lambda}$ is $C^r$-open and dense in $C^r(M,\mathbb{R})$.
\end{proposition}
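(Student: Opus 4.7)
My approach splits the proposition into openness and density, with density being the substance. Let $v^s(x), v^u(x)$ denote continuous unit-vector fields spanning $E^s_x, E^u_x$; these exist and are H\"older continuous on $\Lambda$ thanks to the $C^{1+\varepsilon}$ invariant foliations recalled at the start of Section 2. \emph{Openness} is then immediate: for $f \in \mathcal{R}_{\varphi, \Lambda}$, the continuous functions $x \mapsto \langle \nabla f(x), v^\sigma(x)\rangle$, $\sigma \in \{s,u\}$, are nowhere zero on the compact set $\Lambda$ and hence uniformly bounded below in absolute value. This persists under $C^1$-small (and in particular $C^r$-small) perturbations of $f$.

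For \emph{density}, I plan a transversality-in-parameters argument. Given $f \in C^r(M,\mathbb{R})$, first pick smooth functions $h_1,\dots,h_N$ whose gradients $\{\nabla h_i(x)\}_{i=1}^N$ span $T_x^*M$ at every $x \in \Lambda$ (easily arranged via bump-truncated local coordinate functions on a finite atlas covering $\Lambda$). For $\mathbf{a} \in \mathbb{R}^N$, set $f_{\mathbf{a}} := f + \sum_{i=1}^N a_i h_i$, and define
\[
B := \bigl\{\mathbf{a} \in \mathbb{R}^N : \exists\, x \in \Lambda \text{ with } \nabla f_{\mathbf{a}}(x) \perp E^s_x \text{ or } \nabla f_{\mathbf{a}}(x) \perp E^u_x\bigr\}.
\]
For each fixed $x$, the orthogonality condition $\langle \nabla f_{\mathbf{a}}(x), v^\sigma(x) \rangle = 0$ is a non-trivial affine-linear equation in $\mathbf{a}$ (non-trivial precisely because $\{\nabla h_i(x)\}$ spans $T_x^*M$), and hence carves out an affine hyperplane $H_\sigma(x) \subset \mathbb{R}^N$. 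Thus $B = \bigcup_{x\in\Lambda}(H_s(x) \cup H_u(x))$, a union of affine hyperplanes H\"older-parametrized by $\Lambda$.

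The decisive step is showing that $B$ has Lebesgue measure zero in $\mathbb{R}^N$, via a standard covering estimate. For a $C^{1+\varepsilon}$-horseshoe the Hausdorff and upper box-counting dimensions coincide and equal $d := \dim(\Lambda) < 1$. Covering $\Lambda$ by $\lesssim \rho^{-d}$ balls of radius $\rho$ and using H\"older continuity of $x \mapsto H_\sigma(x)$ with some exponent $\alpha > 0$, each ball contributes to $B$ a set contained in a $\rho^\alpha$-neighborhood of a single hyperplane, of Lebesgue $N$-volume $\lesssim \rho^\alpha$ inside any fixed compact region. Summing yields total volume $\lesssim \rho^{\alpha - d}$, which tends to $0$ as $\rho \to 0$ provided $\alpha > d$. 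Once $|B|=0$ is in hand, density follows: one can pick $\mathbf{a}$ of arbitrarily small norm with $\mathbf{a}\notin B$, making $f_{\mathbf{a}} \in \mathcal{R}_{\varphi, \Lambda}$ arbitrarily close to $f$ in $C^r$.

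The main obstacle is the H\"older-versus-dimension comparison $\alpha > d$ needed for the measure-zero estimate. The hypothesis $\dim(\Lambda) < 1$ is precisely what makes this feasible: by refining the Markov partition if necessary and exploiting both the $C^{1+\varepsilon}$ regularity of the invariant foliations and the product structure $\Lambda \sim K^s \times K^u$ coming from the Markov partition recalled earlier (with $d = d_s + d_u < 1$ so that each factor has small dimension), one can arrange that the transverse H\"older regularity of $v^s, v^u$ along the appropriate direction exceeds the corresponding dimension, and then combine the two factors by a Fubini-type argument to conclude.
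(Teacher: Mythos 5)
Your overall strategy is the same as the paper's: openness is immediate, and density is proved by showing that within a finite-dimensional linear family of perturbations, the ``bad'' parameters form a set of Lebesgue measure zero via a covering estimate that trades a covering of $\Lambda$ at scale $\rho$ against the H\"older modulus of the perpendicularity condition. The paper uses the more economical two-parameter affine family $g(z)=f(z)+\langle v,z\rangle$ in local coordinates, while you use a general $N$-parameter spanning family; that difference is cosmetic.

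The genuine gap is in your last paragraph, where you concede that the required inequality $\alpha > d$ is ``the main obstacle'' and then gesture at ``refining the Markov partition,'' ``transverse H\"older regularity,'' and a ``Fubini-type argument'' without a concrete argument. None of this is needed, and it does not obviously work as stated: refining the Markov partition does not improve the H\"older exponent of the invariant direction fields, and you do not explain how a product decomposition helps. What you are missing is precisely the paper's Lemma~\ref{l.Lipschitz-directions}: for a $C^2$ surface diffeomorphism, the maps $x\mapsto E^s_x$ and $x\mapsto E^u_x$ are $C^1$ on $\Lambda$ (and hence Lipschitz, $\alpha=1$). Combined with the $C^{r-1}\subset C^1$ regularity of $\nabla f$ and $\nabla h_i$, this makes $x\mapsto H_\sigma(x)$ Lipschitz, so your covering bound gives total volume $\lesssim \rho^{1-d}\to 0$, and the condition $\alpha>d$ is simply the hypothesis $\dim(\Lambda)<1$. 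You opened the proof by describing the direction fields as merely ``H\"older continuous'' and never pinned the exponent to $1$, which is why the argument stalls; once the $C^1$ regularity of the bundles is invoked, the proof closes exactly as in the paper and the speculative Fubini step should be deleted.
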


\begin{proof}[Proof of Proposition \ref{p.R-generic}] For the proof of this proposition we will need the following well-known fact (cf. \cite[p. 162--165]{PT} and also \cite{HPS}):

\begin{lemma}\label{l.Lipschitz-directions} The maps $\Lambda\ni x\mapsto E_x^s$ and $\Lambda\ni x\mapsto E_x^u$ are $C^1$. 
\end{lemma}


Observe that, by definition, the set $\mathcal{R}_{\varphi,\Lambda}$ is $C^r$-open. Hence, our task is reduced to show that $\mathcal{R}_{\varphi,\Lambda}$ is $C^r$-dense. 

For this sake, let us fix a smooth system of coordinates $z$ on the initial Markov partition $\{R_a\}_{a\in\mathcal{A}}$ so that, given $f\in C^r(M,\mathbb{R})$ and $\varepsilon>0$, the function $g(z)=f(z)+\langle v, z\rangle$ for $z\in\bigcup\limits_{a\in\mathcal{A}}R_a$ can be extended (via an adequate partition of unity) to a $\varepsilon$-$C^r$-perturbation of $f$  whenever $v\in\mathbb{R}^2$ has norm $\|v\|\leq \varepsilon$. 

Suppose that $f\notin\mathcal{R}_{\varphi,\Lambda}$. Given $0<\varepsilon<1$, we will construct $v\in\mathbb{R}^2$ such that $\|v\|\leq \varepsilon$ and $g(z)=f(z)+\langle v, z\rangle\in\mathcal{R}_{\varphi,\Lambda}$.

For each $\delta>0$, let us consider the set $\mathcal{C}(\delta)$ of admissible finite words of the form $(a_{-m},\dots,a_0,\dots, a_{n})$, $m,n\in\mathbb{N}$, such that the rectangle $R(a_{-m},\dots,a_0,\dots, a_{n})=\bigcap\limits_{j=-m}^{n}\varphi^{-j}(R_{a_j})$ has diameter $\leq\delta$ but the rectangles $R(a_{-m},\dots,a_0,\dots, a_{n-1})$ and $R(a_{-m+1},\dots,a_0,\dots, a_{n})$ have diameters $>\delta$. 

Since $\Lambda$ is a horseshoe associated to a $C^2$-diffeomorphism $\varphi$, we know that, for each $d>\textrm{dim}(\Lambda)$, there exists $\delta_0=\delta_0(d)$ such that 
$$\#\mathcal{C}(\delta)\leq1/\delta^d$$
for all $0<\delta<\delta_0$.

By assumption, $\textrm{dim}(\Lambda)<1$. Thus, we can fix once and for all $\textrm{dim}(\Lambda)<d<1$ (e.g., $d=\sqrt{\textrm{dim}(\Lambda)}$) and the corresponding quantity  $\delta_0=\delta_0(d)>0$. 

Next, let us take $\rho=\rho(f,\Lambda)\geq 1$ such that $|\nabla f(z)|\leq\rho$ for all $z\in\Lambda$. Also, since $f$ is $C^2$, there exists a constant $c_4=c_4(f,\Lambda)>0$ such that 
$$|\nabla f(z)-\nabla f(w)|\leq c_4|z-w|$$
for all $z,w\in\Lambda$. Moreover, by Lemma \ref{l.Lipschitz-directions}, there exists a constant $c_5=c_5(\varphi,\Lambda)>0$ such that 
$$|v^s(z)-v^s(w)|\leq c_5|z-w|$$
and 
$$|v^u(z)-v^u(w)|\leq c_5|z-w|$$
where $v^s$ and $v^u$ (resp.) are unitary vectors in $E^s$ and $E^u$ (resp.).

In this setting, for each rectangle $R(\alpha)$, $\alpha\in\mathcal{C}(\delta)$, such that $\nabla f(z_{\alpha})$ is perpendicular to $v^{\ast}(z_{\alpha})$, $\ast=s \textrm{ or } u$, for some $z_{\alpha}\in R(\alpha)$, we write 
\begin{eqnarray*}
\langle\nabla f(z)+v,v^{\ast}(z)\rangle &=& \langle\nabla f(z)-\nabla f(z_{\alpha}),v^{\ast}(z)\rangle + \langle\nabla f(z_{\alpha}),v^{\ast}(z)-v^{\ast}(z_{\alpha})\rangle \\ &+& \langle\nabla f(z_{\alpha}),v^{\ast}(z_{\alpha})\rangle + \langle v,v^{\ast}(z)-v^{\ast}(z_{\alpha})\rangle+ \langle v,v^{\ast}(z_{\alpha})\rangle
\end{eqnarray*}
We control this quantity as follows. Since $R(\alpha)$ has diameter $\leq\delta$, we see from our previous discussion that 
$$|\langle\nabla f(z)-\nabla f(z_{\alpha}),v^{\ast}(z)\rangle|\leq c_4\delta,\quad \quad |\langle\nabla f(z_{\alpha}),v^{\ast}(z)-v^{\ast}(z_{\alpha})\rangle|\leq\rho c_5\delta$$
$$\langle\nabla f(z_{\alpha}),v^{\ast}(z_{\alpha})\rangle=0,\quad \quad |\langle v, v^{\ast}(z)-v^{\ast}(z_{\alpha})\rangle|\leq\|v\| c_5\delta$$
In particular, it follows that
\begin{eqnarray*}
S_{\alpha}&:=&\{v\in\mathbb{R}^2:\|v\|=\varepsilon, \langle\nabla f(z)+v,v^{\ast}(z)\rangle=0 \textrm{ for some } z\in R(\alpha)\} \\ &\subset&\{v\in\mathbb{R}^2:\|v\|=\varepsilon, |\langle v,v^{\ast}(z_{\alpha})\rangle|\leq (c_4+(\rho+1) c_5)\delta\}
\end{eqnarray*}
for $0<\varepsilon<1$.

Therefore, since the function $g(z)=f(z)+\langle v,z\rangle$ satisfies $\nabla g(z)=\nabla f(z)+v$, the proof of the proposition is complete once we show that there exists $v\in\mathbb{R}^2$ such that $\|v\|=\varepsilon$ and $v\notin\bigcup\limits_{\alpha\in\mathcal{C}(\delta)} S_{\alpha}$. As it turns out, this fact is not hard to check: by our previous discussion, for all $0<\delta<\delta_0$, the relative Lebesgue measure of $\bigcup\limits_{\alpha\in\mathcal{C}(\delta)} S_{\alpha}$ is 
$$\leq (c_4+(\rho+1) c_5)\delta^{1-d}/\varepsilon$$ 
because, for each $\alpha\in\mathcal{C}(\delta)$, the relative Lebesgue measure of $S_{\alpha}$ is $\leq (c_4+(\rho+1) c_5)\delta/\varepsilon$, and the cardinality of $\mathcal{C}(\delta)$ is $\leq \delta^{-d}$.
\end{proof}

\subsection{Approximation of $\Lambda_t$ by subhorseshoes}

During this entire subsection we fix a function $f\in\mathcal{R}_{\varphi, \Lambda}$ where $\mathcal{R}_{\varphi, \Lambda}$ was defined in Subsection \ref{ss.R-construction} above. 

By definition of $\mathcal{R}_{\varphi, \Lambda}$, we can refine the initial Markov partition $\{R_a\}_{a\in\mathcal{A}}$ (if necessary) so that the restriction of $f$ to each of the intervals $\{i_a^s\}\times I_a^u$, $a\in\mathcal{A}$, is monotone (i.e., strictly increasing or decreasing), and, furthermore, for some constant $c_6=c_6(\varphi, f)>0$, the following estimates hold: 
\begin{eqnarray}\label{e.c1}
|f(\underline{\theta}^{(1)}a_1\dots a_n a_{n+1}\underline{\theta}^{(3)})-f(\underline{\theta}^{(1)}a_1\dots a_n a_{n+1}'\underline{\theta}^{(4)})| > c_6 |I^u(a_1\dots a_n)|, \\
|f(\underline{\theta}^{(1)}a_{m+1} a_m\dots a_1\underline{\theta}^{(3)})-f(\underline{\theta}^{(2)}a_{m+1}' a_m\dots a_1\underline{\theta}^{(3)})| > c_6 |I^s(a_m\dots a_1)| \nonumber
\end{eqnarray}
whenever $a_{n+1}\neq a_{n+1}'$, $a_{m+1}\neq a_{m+1}'$ and $\underline{\theta}^{(1)}, \underline{\theta}^{(2)}\in\mathcal{A}^{\mathbb{Z}^-}$, $\underline{\theta}^{(3)}, \underline{\theta}^{(4)}\in\mathcal{A}^{\mathbb{N}}$ are admissible. 

Moreover, we observe that, since $f$ is Lipschitz (actually $f\in C^2$), there exists $c_7=c_7(\varphi, f)>0$ such that one also has the following estimates:
\begin{eqnarray}\label{e.c2}
|f(\underline{\theta}^{(1)}a_1\dots a_n a_{n+1}\underline{\theta}^{(3)})-f(\underline{\theta}^{(1)}a_1\dots a_n a_{n+1}'\underline{\theta}^{(4)})| < c_7 |I^u(a_1\dots a_n)|, \\
|f(\underline{\theta}^{(1)}a_{m+1} a_m\dots a_1\underline{\theta}^{(3)})-f(\underline{\theta}^{(2)}a_{m+1}' a_m\dots a_1\underline{\theta}^{(3)})| < c_7 |I^s(a_1\dots a_m)| \nonumber
\end{eqnarray}   
whenever $a_{n+1}\neq a_{n+1}'$, $a_{m+1}\neq a_{m+1}'$ and $\underline{\theta}^{(1)}, \underline{\theta}^{(2)}\in\mathcal{A}^{\mathbb{Z}^-}$, $\underline{\theta}^{(3)}, \underline{\theta}^{(4)}\in\mathcal{A}^{\mathbb{N}}$ are admissible.

The proof of Theorem \ref{t.A} is based on Proposition \ref{p.Du-upper-sc}, Proposition \ref{p.R-generic} and the following statement whose proof will occupy the remainder of this subsection. 

\begin{proposition}\label{p.tA} Let $\Lambda$ be a horseshoe of a conservative $C^2$-diffeomorphism $\varphi$ of a surface $M$. Let $f\in\mathcal{R}_{\varphi, \Lambda}$ with $\mathcal{R}_{\varphi,\Lambda}$ as defined in Subsection \ref{ss.R-construction}, and let us fix $t\in\mathbb{R}$ with $D_{\textbf{u}}(t)>0$, resp. $D_{\textbf{s}}(t)>0$. 

Then, for each $0<\eta<1$, there exists $\delta>0$ and a complete subshift $\Sigma(\mathcal{B}_u)\subset \Sigma\subset\mathcal{A}^{\mathbb{Z}}$, resp. $\Sigma(\mathcal{B}_s)\subset \Sigma\subset\mathcal{A}^{\mathbb{Z}}$, associated to a finite set $\mathcal{B}_u=\{\beta_1^{(u)},\dots,\beta_m^{(u)}\}$, resp. $\mathcal{B}_s=\{\beta_1^{(s)},\dots,\beta_n^{(s)}\}$, of finite sequences $\beta_i^{(u)}=(a_1^{(i); u},\dots,a_{m_i}^{(i); u})\in\mathcal{A}^{m_i}$, resp. $\beta_i^{(s)}=(a_1^{(i); s},\dots,a_{n_i}^{(i); s})\in\mathcal{A}^{n_i}$ such that 
$$\Sigma(\mathcal{B}_u)\subset\Sigma_{t-\delta},\quad \textrm{resp. } \Sigma(\mathcal{B}_s)\subset\Sigma_{t-\delta},$$ 
and 
$$\textrm{dim}(K^u(\Sigma(\mathcal{B}_u)))>(1-\eta)D_u(t), \quad \textrm{dim}(K^s(\Sigma(\mathcal{B}_u^T)))>(1-\eta)D_u(t)$$
resp.
$$\textrm{dim}(K^s(\Sigma(\mathcal{B}_s)))>(1-\eta)D_s(t), \quad \textrm{dim}(K^u(\Sigma(\mathcal{B}_s^T)))>(1-\eta)D_s(t)$$
where $K^u(\Sigma(\ast))$, resp. $K^s(\Sigma(\ast))$, is the subset of $K^u$, resp. $K^s$, consisting of points whose trajectory under $g_u$, resp. $g_s$, follows an itinerary obtained from the concatenation of words in the alphabet $\ast$, and $\ast^T$ is the alphabet whose words are the transposes of the words of the alphabet $\ast$. 

In particular, $D_s(t)=D_u(t)=d_u(t)=d_s(t)$ for all $t\in\mathbb{R}$. 
\end{proposition}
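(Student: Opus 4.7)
The plan is to approximate $K^u_t$ from within by a Cantor set associated to a complete subshift $\Sigma(\cB_u)\subset\Sigma_{t-\delta}$, exploiting the self-similarity of such a Cantor set together with bounded distortion (Remark~\ref{r.bounded-distortion}) to convert an exponential lower bound on $|\cB_u|$ as a function of the unstable scale $m$ into a dimension lower bound close to $D_u(t)$. The key analytic tool behind the inclusion $\Sigma(\cB_u)\subset\Sigma_{t-\delta}$ is the Lipschitz estimate~\eqref{e.c2}, available because $f\in\cR_{\varphi,\Lambda}$: altering letters of a bi-infinite sequence far from position $0$ perturbs $f$ only by a quantity that decays geometrically with the distance.

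To build $\cB_u$, first choose $m$ large with $|\cC_{\textbf{u}}(t,m)|\ge e^{m(D_u(t)-\eta/8)}$ (Corollary~\ref{c.Du-definition}) and apply pigeonhole on the ordered pair (first letter, last letter) of each word to extract a sub-collection $\widetilde{\cC}\subset\cC_{\textbf{u}}(t,m)$ of cardinality at least $|\cA|^{-2}e^{m(D_u(t)-\eta/8)}$ whose members all begin and end with a common letter $a_*\in\cA$. Free concatenation of elements of $\widetilde{\cC}$ is then automatically admissible. For each $\alpha\in\widetilde{\cC}$ fix an extension $\theta_\alpha\in\Sigma_t$; for $\theta\in\Sigma(\widetilde{\cC})$ and any position $i$ lying inside a block $\alpha^{(k)}$ at depths $d_L$, $d_R$ from its left, right endpoint, \eqref{e.c2} combined with the minimum expansion $\mu>1$ of $g_u,g_s$ yields
\[
|f(\sigma^i\theta)-f(\sigma^i\theta_{\alpha^{(k)}})|\le C(\mu^{-d_L}+\mu^{-d_R}).
\]
Far from the seams this is $o(1)$ as $m\to\infty$, but within an $O(1)$-neighborhood of a seam the bound degenerates to a quantity comparable to $|I^u(a_*)|$.

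To neutralize the seam contribution I would thicken the elements of $\widetilde{\cC}$ by a common admissible prefix $u$ and suffix $v$ of length $B=O(\log(1/\delta)/\log\mu)$, chosen through a secondary pigeonhole so that: (i)~$vu$ is admissible; (ii)~every admissible configuration containing $vu$ has $f$-value below $t-2\delta$ at every position within $B$ of the junction; and (iii)~at least a proportion $|\cA|^{-2B}$ of the words of $\widetilde{\cC}$ already start with $u$ and end with $v$. For~(ii), the hypothesis $D_u(t)>0$ and the monotonicity condition $f\in\cR_{\varphi,\Lambda}$ allow one to locate a long window of an orbit in $\Sigma_t$ whose $f$-maximum is strictly below $t$, and bounded distortion propagates the quantitative gap to neighboring configurations. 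Setting $\cB_u=\{u\alpha v\,:\,\alpha\in\widetilde{\cC}\text{ starts with }u\text{ and ends with }v\}$, one has $|\cB_u|\ge e^{m(D_u(t)-\eta/4)}$ and $|I^u(\beta)|\asymp e^{-m}$ uniformly for $\beta\in\cB_u$ once $m\gg B$. The Moran formula then yields $\dim K^u(\Sigma(\cB_u))\ge D_u(t)-\eta$; Remark~\ref{r.stable-unstable-sizes} transports the same lower bound to $\dim K^s(\Sigma(\cB_u^T))$; and running the construction with $\varphi^{-1}$ in place of $\varphi$ produces $\cB_s$ with the analogous bounds.

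The equalities $D_s(t)=D_u(t)=d_s(t)=d_u(t)$ then follow formally. From $\Sigma(\cB_u)\subset\Sigma_{t-\delta}$ one gets $K^u(\Sigma(\cB_u))\subset K^u_{t-\delta}$, whence $d_u(t)\ge d_u(t-\delta)\ge(1-\eta)D_u(t)$; together with $d_u(t)\le D_u(t)$ (Remark~\ref{r.Du}) and the corresponding inequalities on the stable side, letting $\eta\to 0$ gives $d_u(t)=D_u(t)$ and $d_s(t)=D_s(t)$. Meanwhile, the cross-estimates $\dim K^s(\Sigma(\cB_u^T))\ge(1-\eta)D_u(t)$ and $\dim K^u(\Sigma(\cB_s^T))\ge(1-\eta)D_s(t)$ force $D_s(t)=D_u(t)$. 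The principal obstacle I foresee lies in condition~(ii) of the construction: simultaneously enforcing admissibility of the junction $vu$, a strict quantitative gap $f\le t-2\delta$ in a neighborhood of it, and a definite proportion of words of $\widetilde{\cC}$ carrying the prescribed prefix/suffix, all in a way that delivers a uniform $\delta=\delta(\eta)>0$. This is precisely where the interplay between monotonicity of $f$, bounded distortion, and the combinatorics of $\Sigma_{\cT}$ must be carefully calibrated.
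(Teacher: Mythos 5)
Your high-level plan --- build a complete subshift $\Sigma(\mathcal{B}_u)\subset\Sigma_{t-\delta}$ by concatenating words at a fixed scale, use bounded distortion and the Moran estimate for the dimension bound, transfer to the stable side via Remark~\ref{r.stable-unstable-sizes}, and symmetrize --- matches the paper's outer strategy, and you correctly identify the seam control as the crux. But your proposed mechanism for the $\delta$-gap is precisely the step that does not work, and the paper's mechanism is genuinely different. Your condition (ii) asks for a fixed prefix $u$ and suffix $v$ such that \emph{every} admissible configuration around $vu$ has $f$-value below $t-2\delta$, while (iii) demands a definite proportion of words carry that prefix/suffix. Nothing in the hypotheses guarantees compatibility: it can happen that essentially all words in $\mathcal{C}_{\textbf{u}}(t,m)$ extend to sequences whose $f$-values come arbitrarily close to $t$ at every position, in which case no single prefix/suffix can produce a quantitative gap and still capture a positive fraction. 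The paper's substitute is the notion of a \emph{good position} (Definition~\ref{d.good-position}): a position $j$ of $\beta=\beta_1\dots\beta_k$ is good if the interval $I^u(\beta_j)$ is \emph{strictly sandwiched between} $I^u(\beta_j^{(1)})$ and $I^u(\beta_j^{(2)})$ for two alternative admissible completions that also meet $K_t^u$ (and analogously on the stable side). The three counting lemmas (\ref{l.most-good}, \ref{l.excellent-words}, \ref{l.nice-cuts-of-excellent-words}) produce an alphabet $\mathcal{B}$ of large cardinality, all of whose words begin with a fixed $\gamma_1$ and end with a fixed $\gamma_2$, with $\gamma_2\gamma_1$ occurring at a pair of good positions. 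The $\delta$-gap is then \emph{manufactured} rather than \emph{located}: local monotonicity of $f$ (from $f\in\mathcal{R}_{\varphi,\Lambda}$) together with the lower bound \eqref{e.c1} forces the $f$-value at the sandwiched configuration to sit a definite amount $\sim c_6\cdot|I^u(\cdot)|$ \emph{below} the larger of the two alternatives, each of which extends into $\Sigma_t$ and hence has $f\leq t$. No a priori sub-$t$ window is needed, which is exactly what your (ii) cannot supply.

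A secondary gap: your pigeonhole on (first letter, last letter) does not make free concatenation automatically admissible --- you still need the transition from the common last letter to the common first letter to lie in $\mathcal{T}$, which need not hold for a subshift of finite type (and transitivity alone only gives it after inserting connecting words, which would perturb the cardinality count). In the paper's construction the junction $\gamma_2\gamma_1$ is admissible for free because that block already occurs inside the words of $X$ at positions $\widehat{j}_{q_0},\widehat{j}_{q_0}+1$. Your closing formal argument deriving $D_s(t)=D_u(t)=d_u(t)=d_s(t)$ from the stated inclusions and dimension bounds is sound, so the deficiency is concentrated in the construction of $\mathcal{B}_u$ itself.
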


\begin{remark}\label{r.general-continuity} A close inspection of the proof of Proposition \ref{p.tA} reveals that even if $\varphi$ is \emph{not} necessarily conservative, for each $\eta>0$, we can find $\delta>0$ and a complete subshift $\Sigma(\mathcal{B}_u)\subset \Sigma\subset\mathcal{A}^{\mathbb{Z}}$, resp. $\Sigma(\mathcal{B}_s)\subset \Sigma\subset\mathcal{A}^{\mathbb{Z}}$ such that $\textrm{dim}(K^u(\Sigma(\mathcal{B}_u)))>(1-\eta)D_u(t)$ and $\textrm{dim}(K^s(\Sigma(\mathcal{B}_s)))>(1-\eta)D_s(t)$. In particular, we can use this result together with Proposition \ref{p.Du-upper-sc} to get the continuity statement in Remark \ref{r.tA'}. 
\end{remark}

\begin{remark}\label{r.symmetry-du-ds} By symmetry (i.e., exchanging the roles of $\varphi$ and $\varphi^{-1}$), it suffices to exhibit $\mathcal{B}_u$ satisfying the conclusion of Proposition \ref{p.tA}.
\end{remark} 

The construction of $\mathcal{B}_u$ depends on the following three combinatorial lemmas (cf. Lemmas \ref{l.most-good}, \ref{l.excellent-words} and \ref{l.nice-cuts-of-excellent-words} below).

Take $\tau=\eta/100$ and choose $r_0=r_0(\varphi, f, t, \eta)\in\mathbb{N}$ large so that
\begin{equation}\label{e.r0-constraint}
\left|\frac{\log N_u(t,r)}{r}-D_u(t)\right|<\frac{\tau}{2}D_u(t)
\end{equation}
for all $r\in\mathbb{N}$, $r\geq r_0$.  

We set $\mathcal{B}_0=\mathcal{C}_{\textbf{u}}(t,r_0)$, $N_0:=N_{\textbf{u}}(t,r_0)=\#\mathcal{C}_{\textbf{u}}(t,r_0)$, $k:=8 N_0^2\lceil2/\tau\rceil$ and  
$$\widetilde{\mathcal{B}}=\widetilde{\mathcal{B}}_u:=\{\beta:=\beta_1\dots\beta_k : \beta_j\in\mathcal{B}_0 \,\,\, \forall \, 1\leq j\leq k \,\, \textrm{ and } \,\, K_t^u\cap I^u(\beta)\neq\emptyset\}$$

Our plan towards the proof of Proposition \ref{p.tA} is to extract from $\widetilde{\mathcal{B}}$ a rich alphabet $\mathcal{B}$ inducing a complete shift $\Sigma(\mathcal{B})=\mathcal{B}^{\mathbb{Z}}\subset\Sigma_{t-\delta}$ for some $\delta>0$. In this direction, the following notion plays a key role:

\begin{definition}\label{d.good-position} Given $\beta=\beta_1\dots\beta_k\in\widetilde{\mathcal{B}}$ with $\beta_i\in\mathcal{B}_0$ for all $1\leq i\leq k$, we say that $j\in\{1,\dots,k\}$ is a \emph{right-good position} of $\beta$ if there are two elements 
$$\beta^{(n)}=\beta_1\dots\beta_{j-1}\beta_j^{(n)}\dots\beta_k^{(n)}, \quad n=1, 2$$
of $\widetilde{\mathcal{B}}$ such that $\sup I^u(\beta_j^{(1)})<\inf I^u(\beta_j)\leq \sup I^u(\beta_j)<\inf I^u(\beta_j^{(2)})$, i.e., the interval $I^u(\beta_j)$ is located between $I^u(\beta_j^{(1)})$ and $I^u(\beta_j^{(2)})$. 

Similarly, we say that $j\in\{1,\dots, k\}$ is a \emph{left-good position} of $\beta$ if there are two elements 
$$\beta^{(n)}=\beta_1^{(n)}\dots\beta_j^{(n)}\beta_{j+1}\dots\beta_k, \quad n=3, 4$$
of $\widetilde{\mathcal{B}}$ such that $\sup I^s((\beta_j^{(3)})^T)<\inf I^s(\beta_j^T)\leq \sup I^s(\beta_j^T)<\inf I^s((\beta_j^{(4)})^T)$, i.e., the interval $I^s(\beta_j^T)$ is located between $I^s((\beta_j^{(3)})^T)$ and $I^s((\beta_j^{(4)})^T)$.

Finally, we say that $j\in\{1,\dots, k\}$ is a \emph{good position} of $\beta$ if it is both a right-good and a left-good position of $\beta$.
\end{definition}

Our first combinatorial lemma says that most positions of most words of $\widetilde{\mathcal{B}}$ are good:

\begin{lemma}\label{l.most-good} The subset 
$$\mathcal{E}=\{\beta=\beta_1\dots\beta_k\in\widetilde{\mathcal{B}}: \textrm{ the number of good positions of } \beta \textrm{ is } \geq 9k/10\}$$
has cardinality 
$$\#\mathcal{E}\geq\#\widetilde{\mathcal{B}}/2>N_0^{(1-\tau)k}$$
\end{lemma}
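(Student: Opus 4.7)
The lemma contains two claims: (A) $\#\mathcal{E}\geq\#\widetilde{\mathcal{B}}/2$ and (B) $\#\widetilde{\mathcal{B}}/2>N_0^{(1-\tau)k}$. My plan is to treat them in turn.

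For (B), the point is that $\#\widetilde{\mathcal{B}}$ is essentially comparable to $N_\textbf{u}(t,kr_0)$. Indeed, by the bounded distortion property (Remark~\ref{r.bounded-distortion}), each $\beta=\beta_1\dots\beta_k\in\widetilde{\mathcal{B}}$ gives a cylinder $I^u(\beta)$ whose unstable scale lies in $[kr_0,kr_0+O(k)]$, and conversely every element of $\mathcal{C}_\textbf{u}(t,kr_0)$ can be split greedily into $k$ successive $\mathcal{B}_0$-blocks at the cost of a factor involving the constant $c_3$ from Lemma~\ref{l.Nu-submultiplicative}. Applying \eqref{e.r0-constraint} at the scales $r_0$ and $kr_0$ yields $N_\textbf{u}(t,kr_0)\geq e^{kr_0(1-\tau/2)D_u(t)}$ and $N_0\leq e^{r_0(1+\tau/2)D_u(t)}$, hence $N_0^{(1-\tau)k}\leq e^{kr_0(1-\tau/2-\tau^2/2)D_u(t)}$. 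The exponential gap $e^{kr_0\tau^2 D_u(t)/2}$ easily dominates $2$ for $k=8N_0^2\lceil 2/\tau\rceil$, giving (B).

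For (A), I first use the symmetry between right- and left-goodness (swap $\varphi\leftrightarrow\varphi^{-1}$) together with a pigeonhole applied to the inclusion $(\text{not good})\subseteq(\text{not right-good})\cup(\text{not left-good})$ in order to reduce to showing $\#\mathcal{B}ad_R\leq\#\widetilde{\mathcal{B}}/4$, where $\mathcal{B}ad_R:=\{\beta\in\widetilde{\mathcal{B}}:\text{more than }k/20\text{ positions are not right-good}\}$. The key structural observation is that a position $j$ fails to be right-good precisely when $\beta_j$ is the leftmost or rightmost (in the $I^u$-order) child of the prefix $\beta_1\dots\beta_{j-1}$ in the tree induced by $\widetilde{\mathcal{B}}$, and each internal node has at most two such extreme children.

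The bound on $\mathcal{B}ad_R$ then comes from an injective encoding: to each $\beta\in\mathcal{B}ad_R$ associate the triple $(S,\sigma,\eta)$, where $S\subseteq\{1,\dots,k\}$ is the set of the first $\lceil k/20\rceil$ not-right-good positions of $\beta$, $\sigma:S\to\{L,R\}$ specifies the chosen extreme child at each such position, and $\eta(j)=\beta_j\in\mathcal{B}_0$ for $j\notin S$. Reconstructing $\beta$ left-to-right shows injectivity, so
$$\#\mathcal{B}ad_R\leq\binom{k}{\lceil k/20\rceil}\cdot 2^{\lceil k/20\rceil}\cdot N_0^{k-\lceil k/20\rceil}.$$
With the binary entropy estimate $\binom{k}{\lceil k/20\rceil}\leq 2^{kH(1/20)}$ and the lower bound (B), this quantity is $\leq\#\widetilde{\mathcal{B}}/4$ provided $N_0^{1/20-\tau}$ exceeds a fixed absolute constant; this in turn holds because $\tau=\eta/100<1/20$ and $N_0=N_\textbf{u}(t,r_0)\geq e^{r_0(1-\tau/2)D_u(t)}$ can be made arbitrarily large by enlarging $r_0$, which preserves~\eqref{e.r0-constraint}. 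The main obstacle is the accounting in (B): one must check carefully that the rigid requirement of exactly $k$ blocks in the definition of $\widetilde{\mathcal{B}}$ does not destroy the expected exponential growth rate. Once this matching of growth rates is pinned down, the combinatorial bookkeeping in (A) is routine.
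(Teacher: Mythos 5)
Your approach is essentially the paper's: estimate $\#\widetilde{\mathcal{B}}$ from below against $N_\textbf{u}$ at scale $\approx kr_0$, then bound the number of words with many not-right-good (resp.\ not-left-good) positions by exploiting the observation that a not-right-good position admits at most two choices of $\mathcal{B}_0$-block (the extremes), and subtract. Step (A) is in substance identical to the paper's count; the paper even uses the cruder $2^k$ in place of your $\binom{k}{\lceil k/20\rceil}$, and then checks that $2^{1+21k/20}N_0^{19k/20}<N_0^{(1-\tau)k}$ once $r_0$ (and hence $N_0$) is large.

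The one place where you leave a genuine gap is exactly the one you flag in step (B). Your ``conversely'' sentence --- that each element of $\mathcal{C}_\textbf{u}(t,kr_0)$ can be greedily split into exactly $k$ successive $\mathcal{B}_0$-blocks at the cost of a $c_3$-factor --- is not correct as stated: a word of unstable scale $\approx kr_0$ decomposes into roughly $k$ blocks, but not exactly $k$, and fixing the count at $k$ is precisely the rigidity problem you point to. The paper goes in the opposite direction and avoids the issue entirely: since each $\beta_j\in\mathcal{B}_0$ has $r^{(\textbf{u})}(\beta_j)\geq r_0$, bounded distortion gives $r^{(\textbf{u})}(\beta_1\dots\beta_k)\geq k(r_0-c_1)$, so one may \emph{truncate} each $\beta\in\widetilde{\mathcal{B}}$ to its shortest prefix of unstable scale $\geq k(r_0-c_1)$, producing a surjection $h:\widetilde{\mathcal{B}}\to\mathcal{C}_\textbf{u}(t,k(r_0-c_1))$ and hence $\#\widetilde{\mathcal{B}}\geq N_\textbf{u}(t,k(r_0-c_1))$. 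Surjectivity uses only that $K^u_t$ is $g_u$-invariant, so any $x\in K^u_t\cap I^u(\alpha)$ has an infinite admissible code whose greedy $\mathcal{B}_0$-block decomposition furnishes a preimage of $\alpha$. Feeding this lower bound into \eqref{e.r0-constraint} at scale $k(r_0-c_1)$ then gives $\#\widetilde{\mathcal{B}}>2N_0^{(1-\tau)k}$; this is the missing step, and the rest of your argument is sound.
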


\begin{proof} Let us begin by estimating the cardinality of $\widetilde{\mathcal{B}}$. Recall that the sizes of the intervals $I^u(\alpha)$ behave essentially submultiplicatively due the bounded distortion property of $g_u$ (cf. Remark \ref{r.bounded-distortion}) so that, for some constant $c_1=c_1(\varphi, \Lambda)\geq1$, one has   
$$|I^u(\beta)|\leq \exp(-k(r_0-c_1))$$
for any $\beta\in\widetilde{\mathcal{B}}$, and, thus, $\{I^u(\beta) : \beta\in\widetilde{\mathcal{B}}\}$ is a covering of $K_t^u$ by intervals of sizes $\leq \exp(-k(r_0-c_1))$. In particular, we have a natural surjective map $h:\widetilde{\mathcal{B}}\to \mathcal{C}_{\textbf{u}}(t, k(r_0-c_1))$ given by 
$$h((b_1\dots b_{n(k)}))=(b_1\dots b_j)$$ where  
$$j:=\min\{1\leq i\leq n(k): r^{(\textbf{u})}(b_1\dots b_i)\geq k(r_0-c_1)\}$$

On the other hand, since $k(r_0-c_1)\geq r_0$ for $r_0=r_0(\varphi, f, t)\in\mathbb{N}$ large enough, by \eqref{e.r0-constraint} we have that
$$\#\mathcal{C}_{\textbf{u}}(t, k(r_0-c_1)):=N_{\textbf{u}}(t, k(r_0-c_1))\geq \frac{1}{4}\exp(k(r_0-c_1)D_u(t))$$

In particular, by putting these informations together, we see that, for $r_0=r_0(\varphi, f, t)\in\mathbb{N}$ large enough, the following estimate holds: 
\begin{eqnarray*}
\#\widetilde{\mathcal{B}}&\geq& \frac{1}{4}\exp(k(r_0-c_1)D_u(t)) \\ &>& 2\exp(k(r_0-2c_1)D_u(t)) \quad (\textrm{since } k \textrm{ is large for } r_0 \textrm{ large  by } \eqref{e.r0-constraint} \textrm{ and }D_u(t)>0)\\ 
&\geq& 2 \exp\left(\left(1-\frac{\tau}{2}\right)k r_0 D_u(t)\right)>2 \exp\left((1-\tau)\left(1+\frac{\tau}{2}\right)k r_0 D_u(t)\right) \\ 
&>& 2N_0^{(1-\tau)k} \quad (\textrm{since } N_0<\exp((1+\tau/2)r_0D_u(t)) \textrm{ by }\eqref{e.r0-constraint})
\end{eqnarray*}

In summary, the set $\widetilde{\mathcal{B}}$ has cardinality 
\begin{equation}\label{e.B-tilde-cardinality}
\#\widetilde{\mathcal{B}}>2N_0^{(1-\tau)k}
\end{equation}
for $r_0=r_0(\varphi, f,t,\eta)\in\mathbb{N}$ large enough.

Now, let us estimate the cardinality of the subset of $\widetilde{\mathcal{B}}$ consisting of words $\beta$ such that at least $k/20$ positions are not right-good. First, we notice that there are at most $2^k$ choices for the set of $m\geq k/20$ right-bad (i.e., not right-good) positions. Secondly, once this set of right-bad positions is fixed: 
\begin{itemize}
\item if $j$ is a right-bad position and $\beta_1, \dots, \beta_{j-1}\in\mathcal{B}_0$ were already chosen, then we see that there are at most two possibilities for $\beta_j\in\mathcal{B}_0$ (namely, the choices leading to the leftmost and rightmost subintervals of $I^u(\beta_1\dots\beta_{j-1})$ of the form $I^u(\beta_1\dots\beta_k)$ intersecting $K_t^u$);
\item if $j$ is not a right-bad position, then there are at most $N_0$ choices of $\beta_j$.
\end{itemize}
In particular, once a set of $m\geq k/20$ right-bad positions is fixed, the quantity of words in $\widetilde{\mathcal{B}}$ with this set of $m$ right-bad positions is at most 
$$2^m\cdot N_0^{k-m}\leq 2^{k/20} \cdot N_0^{19k/20}$$
Therefore, the quantity of words in $\widetilde{\mathcal{B}}$ with at least $k/20$ right-bad positions is 
$$\leq 2^k\cdot 2^{k/20}\cdot N_0^{19k/20} = 2^{21k/20}\cdot N_0^{19k/20}$$
Analogously, the quantity of words in $\widetilde{\mathcal{B}}$ with at least $k/20$ left-bad positions is also $\leq 2^{21k/20}\cdot N_0^{19k/20}$.

By definition of $\mathcal{E}$, it follows that 
$$\#\mathcal{E}\geq \#\widetilde{\mathcal{B}}-2\cdot 2^{21k/20}\cdot N_0^{19k/20}$$
Since $\#\widetilde{\mathcal{B}}>2N_0^{(1-\tau)k}$ (by \eqref{e.B-tilde-cardinality}) and $2^{1+21k/20}\cdot N_0^{19k/20}<N_0^{(1-\tau)k}$ (from our choices of $r_0$ large, $N_0$ and $k$), we deduce that 
$$\#\mathcal{E}\geq \frac{1}{2}\#\widetilde{\mathcal{B}}>N_0^{(1-\tau)k}.$$
This completes the proof of the lemma.  
\end{proof}

In the sequel, we will call \emph{excellent word} an arbitrary element $\beta\in\mathcal{E}$ of the subset $\mathcal{E}$ introduced in Lemma \ref{l.most-good} above.

Our second combinatorial lemma states that several excellent words $\beta\in\mathcal{E}$ share the same good poistions and the same words of $\mathcal{B}_0$ appearing in these positions. 

\begin{lemma}\label{l.excellent-words} There are natural numbers $1\leq \widehat{j}_1\leq \dots\leq \widehat{j}_{3N_0^2}\leq k$ with $\widehat{j}_{m+1}-\widehat{j}_m\geq 2\lceil2/\tau\rceil$ for $1\leq m<3N_0^2$ and words $\widehat{\beta}_{\widehat{j}_1}, \widehat{\beta}_{\widehat{j}_1+1}, \dots, \widehat{\beta}_{\widehat{j}_{3N_0^2}}, \widehat{\beta}_{\widehat{j}_{3N_0^2}+1}\in\mathcal{B}_0$ such that the set 
$$X:=\{\beta_1\dots\beta_k\in\mathcal{E}: \widehat{j}_m, \widehat{j}_{m}+1 \textrm{ good positions and } \beta_{\widehat{j}_m}=\widehat{\beta}_{\widehat{j}_m}, \beta_{\widehat{j}_m+1}=\widehat{\beta}_{\widehat{j}_m+1}\,\forall\, 1\leq m<3N_0^2\}$$
has cardinality 
$$\#X>N_0^{(1-2\tau)k}$$
\end{lemma}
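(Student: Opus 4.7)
My plan is to combine a greedy construction (producing candidate positions for each $\beta$) with two successive pigeonholes (fixing first the positions and then the letters at those positions), arranging that the total shrinkage of $\mathcal{E}$ is absorbed by the slack between the bounds $N_0^{(1-\tau)k}$ and $N_0^{(1-2\tau)k}$.

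\textbf{Step 1: a candidate tuple per $\beta$.} For each $\beta = \beta_1 \dots \beta_k \in \mathcal{E}$, set
$$T(\beta) = \{j\in\{1,\dots,k-1\} : \text{both } j \text{ and } j+1 \text{ are good positions of } \beta\}.$$
Since $\beta$ has at most $k/10$ bad positions and each bad position belongs to at most two consecutive pairs, $|T(\beta)|\geq (k-1) - 2(k/10) \geq 4k/5 - 1$. Writing $L := 2\lceil 2/\tau\rceil$, I would define greedily $\ell_1(\beta) = \min T(\beta)$ and $\ell_{i+1}(\beta) = \min\{t\in T(\beta) : t\geq \ell_i(\beta) + L\}$. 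By construction the $\ell_i(\beta)$ lie in $T(\beta)$ with consecutive differences $\geq L$. Since every element of $T(\beta)$ lies in some $[\ell_i(\beta),\ell_i(\beta)+L-1]$, the number of picks $N(\beta)$ satisfies $N(\beta)\cdot L \geq |T(\beta)|$, hence $N(\beta) \geq (4k/5-1)/L = (16/5)N_0^2 - 1/L > 3N_0^2$ once $r_0$ (and thus $N_0$) is large enough at the start. Truncating to the first $3N_0^2$ picks gives a tuple $\vec\ell(\beta) = (\ell_1(\beta),\dots,\ell_{3N_0^2}(\beta))$ of elements of $\{1,\dots,k-1\}$ with the prescribed spacing and with $\ell_m(\beta),\ell_m(\beta)+1$ both good positions of $\beta$ for every $m$.

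\textbf{Step 2: two pigeonholes.} There are at most $k^{3N_0^2}$ possible tuples $\vec\ell$, so some specific tuple $(\widehat{j}_1,\dots,\widehat{j}_{3N_0^2})$ equals $\vec\ell(\beta)$ on a subset $\mathcal{E}_1\subseteq \mathcal{E}$ with $\#\mathcal{E}_1 \geq \#\mathcal{E}/k^{3N_0^2}$. Similarly, the map sending $\beta\in\mathcal{E}_1$ to the tuple $(\beta_{\widehat{j}_1},\beta_{\widehat{j}_1+1},\dots,\beta_{\widehat{j}_{3N_0^2}},\beta_{\widehat{j}_{3N_0^2}+1})\in\mathcal{B}_0^{6N_0^2}$ takes at most $N_0^{6N_0^2}$ values, so a second pigeonhole furnishes words $\widehat{\beta}_{\widehat{j}_m},\widehat{\beta}_{\widehat{j}_m+1}\in\mathcal{B}_0$ ($1\le m\le 3N_0^2$) and a set
$$X \subseteq \mathcal{E}_1 \quad\text{with}\quad \#X \geq \#\mathcal{E}/\bigl(k^{3N_0^2}\,N_0^{6N_0^2}\bigr),$$
on which the letters at all the distinguished positions take the prescribed values. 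By construction $X$ meets every requirement in the statement of the lemma.

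\textbf{Step 3: verifying the cardinality bound.} Substituting $\#\mathcal{E}>N_0^{(1-\tau)k}$, the desired inequality $\#X > N_0^{(1-2\tau)k}$ reduces to $\tau k \log N_0 > 3N_0^2 \log k + 6N_0^2 \log N_0$, i.e. $\tau k > 3N_0^2 \log_{N_0}k + 6N_0^2$. Now $\tau k = 8\tau N_0^2\lceil 2/\tau\rceil \geq 16N_0^2$, while the identity $k = 8N_0^2\lceil 2/\tau\rceil$ yields $\log_{N_0}k \to 2$ as $N_0\to\infty$. Therefore $3\log_{N_0}k + 6 \to 12 < 16$, and the inequality holds once $r_0$ (and hence $N_0$) is chosen sufficiently large in the constraint \eqref{e.r0-constraint} at the start of the proof. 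The only obstacle is this final bookkeeping, which forces $r_0$ to be coordinated with $\tau = \eta/100$; since $r_0$ is allowed to depend on $\eta$, this is harmless.
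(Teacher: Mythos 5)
Your argument is correct and follows essentially the same route as the paper: for each excellent word, extract a well-spaced tuple of $3N_0^2$ positions where both $j$ and $j+1$ are good, then pigeonhole first over the tuple of positions and then over the tuple of letters occupying them, using the slack between $N_0^{(1-\tau)k}$ and $N_0^{(1-2\tau)k}$ to absorb the two losses. The only cosmetic differences are that you spell out the greedy extraction and bound the number of position tuples by $k^{3N_0^2}$ where the paper uses $\binom{k}{3N_0^2}<2^k$; both bounds are comfortably dominated by $N_0^{\tau k}$ once $r_0$ is large.
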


\begin{proof} Given $\beta=\beta_1\dots\beta_k\in\mathcal{E}$ an excellent word, we can find $\lceil2k/5\rceil$ positions 
$$1\leq i_1\leq \dots \leq i_{\lceil2k/5\rceil}\leq k$$
such that $i_{n+1}\geq i_n+2$ for all $1\leq n<\lceil 2k/5\rceil$ and the positions
$$i_1, i_1+1, \dots, i_{\lceil2k/5\rceil}, i_{\lceil2k/5\rceil}+1$$
are good. 

Since we took $k:=8N_0^2\lceil2/\tau\rceil$, it makes sense to set  
$$j_n:=i_{n\lceil2/\tau\rceil} \quad \textrm{ for } 1\leq n\leq 3N_0^2$$
because $3N_0^2\lceil2/\tau\rceil<(16/5)N_0^2\lceil2/\tau\rceil = 2k/5$. In this way, we obtain positions such that 
$$j_{n+1}-j_n\geq 2\lceil2/\tau\rceil  \quad \textrm{ for } 1\leq n\leq 3N_0^2$$
and $j_1, j_1+1, \dots, j_{3N_0^2}, j_{3N_0^2}+1$ are good positions.

Now, we observe that: 
\begin{itemize}
\item the number of possibilities for $(j_1,\dots,j_{3N_0^2})$ is at most $\binom{k}{3N_0^2}<2^k$ and 
\item for each fixed $(j_1,\dots,j_{3N_0^2})$, the number of possibilities for $$(\beta_{j_1},\beta_{j_1+1},\dots,\beta_{3N_0^2},\beta_{3N_0^2}+1)$$ is at most $N_0^{6N_0^2}$ (because $N_0=\#\mathcal{B}_0$ and $\beta_{j_n}, \beta_{j_n+1}\in\mathcal{B}_0$ for $1\leq n\leq 3N_0^2$).
\end{itemize}

Hence, we can choose $1<\widehat{j_1}<\dots<\widehat{j}_{3N_0^2}<k$ with $\widehat{j}_{n+1}-\widehat{j}_n\geq 2\lceil2/\tau\rceil$ for $1\leq n\leq 3N_0^2$, and some words 
$$\widehat{\beta}_{\widehat{j}_1}, \widehat{\beta}_{\widehat{j}_1+1},\dots, \widehat{\beta}_{\widehat{j}_{3N_0^2}}, \widehat{\beta}_{\widehat{j}_{3N_0^2}+1}\in\mathcal{B}_0$$
such that the set 
$$X:=\{\beta_1\dots\beta_k\in\mathcal{E}: \widehat{j}_m, \widehat{j}_{m}+1 \textrm{ good positions and } \beta_{\widehat{j}_m}=\widehat{\beta}_{\widehat{j}_m}, \beta_{\widehat{j}_m+1}=\widehat{\beta}_{\widehat{j}_m+1}\,\forall\, 1\leq m<3N_0^2\}$$
has cardinality 
$$\#X\geq\frac{\#\mathcal{E}}{2^k N_0^{6 N_0^2}}$$
Since $\#\mathcal{E}\geq N_0^{(1-\tau)k}$ (cf. Lemma \ref{l.most-good}) and $2^kN_0^{6N_0^2}<N_0^{\tau k}$, the proof of the lemma is complete.
\end{proof}

Our third combinatorial lemma states that it is possible to cut excellent words in the subset $X$ provided by Lemma \ref{l.excellent-words} at certain good positions in such a way that one obtains a finite set $\mathcal{B}=\mathcal{B}_u$ with non-neglectible cardinality. 

\begin{lemma}\label{l.nice-cuts-of-excellent-words} In the context of Lemma \ref{l.excellent-words}, given $1\leq p<q\leq 3N_0^2$, let $\pi_{p,q}:X\to\mathcal{B}_0^{\widehat{j}_q-\widehat{j}_p}$ be the projection 
$$\pi_{p,q}(\beta_1\dots\beta_k):=(\beta_{\widehat{j}_p+1},\dots,\beta_{\widehat{j}_q})$$
obtained by cutting a word $\beta_1\dots\beta_k\in X$ at the positions $\widehat{j}_p$ and $\widehat{j}_q$ and discarding the words $\beta_j$ with $j\leq\widehat{j}_p$ and $j>\widehat{j}_q$.

Then, there are $1\leq p_0<q_0\leq 3N_0^2$ such that $\widehat{\beta}_{\widehat{j}_{p_0}}=\widehat{\beta}_{\widehat{j}_{q_0}}$, $\widehat{\beta}_{\widehat{j}_{p_0+1}}=\widehat{\beta}_{\widehat{j}_{q_0+1}}$ and the cardinality of 
$$\mathcal{B}=\mathcal{B}_u:=\pi_{p_0,q_0}(X)$$
is 
$$\#\mathcal{B}>N_0^{(1-10\tau)(\widehat{j}_{q_0}-\widehat{j}_{p_0})}$$
\end{lemma}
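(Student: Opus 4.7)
The plan is to combine a pigeonhole on labels with a fiber-counting estimate, exploiting the lower bound $\#X > N_0^{(1-2\tau)k}$ from Lemma \ref{l.excellent-words}. The goal is to locate a pair $(p_0, q_0)$ satisfying the matching-label conditions whose $\widehat{j}$-gap exceeds $k/5$; the cardinality of $\mathcal{B} = \pi_{p_0, q_0}(X)$ then follows from a straightforward fiber count.

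For the pigeonhole, to each index $m \in \{1, \ldots, 3N_0^2 - 1\}$ I associate the pair of letters $(\widehat{\beta}_{\widehat{j}_m}, \widehat{\beta}_{\widehat{j}_{m+1}}) \in \mathcal{B}_0 \times \mathcal{B}_0$. This pair takes at most $N_0^2$ distinct values, so a naive pigeonhole produces some value shared by at least three indices, giving matching labels but not necessarily a large $\widehat{j}$-gap. To also force $\widehat{j}_{q_0} - \widehat{j}_{p_0} > k/5$, I would partition $\{1, \ldots, 3N_0^2\}$ into three contiguous blocks $F, M, L$ of size $N_0^2$ each and seek a label value shared between $F$ and $L$: such a cross-block match has index-gap $\geq N_0^2 + 1$ and hence $\widehat{j}$-gap $\geq 2(N_0^2+1)\lceil 2/\tau\rceil > k/4 > k/5$. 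In the residual case $L_F \cap L_L = \emptyset$ (forcing $|L_F| + |L_L| \leq N_0^2$, so at least one of the two blocks uses $\leq N_0^2/2$ distinct labels), a refined pigeonhole within that block, combined with the minimum spacing $\widehat{j}_{m+1} - \widehat{j}_m \geq 2\lceil 2/\tau\rceil$ and the choice $k = 8N_0^2\lceil 2/\tau\rceil$, should produce an intra-block matching pair whose $\widehat{j}$-gap still exceeds $k/5$.

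Once $(p_0, q_0)$ is fixed, fiber counting is immediate: each word $\beta = \beta_1 \cdots \beta_k \in X$ decomposes into (prefix, middle, suffix) split at positions $\widehat{j}_{p_0}$ and $\widehat{j}_{q_0}$, and the fiber over any given middle is bounded by the $N_0^{k - (\widehat{j}_{q_0}-\widehat{j}_{p_0})}$ ways to fill the remaining positions with letters from $\mathcal{B}_0$. Combined with $\#X > N_0^{(1-2\tau)k}$, this yields
\[
\#\pi_{p_0, q_0}(X) \;\geq\; \frac{\#X}{N_0^{k-(\widehat{j}_{q_0}-\widehat{j}_{p_0})}} \;>\; N_0^{(\widehat{j}_{q_0}-\widehat{j}_{p_0}) - 2\tau k},
\]
which exceeds $N_0^{(1-10\tau)(\widehat{j}_{q_0}-\widehat{j}_{p_0})}$ exactly when $\widehat{j}_{q_0}-\widehat{j}_{p_0} > k/5$. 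The matching-label conditions on $(p_0, q_0)$ are what make free concatenation of elements of $\mathcal{B}_u$ admissible, so that $\Sigma(\mathcal{B}_u) = \mathcal{B}_u^{\mathbb{Z}}$ is a legitimate complete subshift for use in Proposition \ref{p.tA}. The hardest step is the combinatorial coupling in the pigeonhole—matching pairs come cheaply but controlling their $\widehat{j}$-gap does not—and the three-block scheme above, with its residual case analysis, is where the proof requires the most attention.
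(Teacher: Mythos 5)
Your fiber-count computation is correct: from $\#X > N_0^{(1-2\tau)k}$ you do get $\#\pi_{p_0,q_0}(X) > N_0^{(\widehat{j}_{q_0}-\widehat{j}_{p_0}) - 2\tau k}$, and this beats $N_0^{(1-10\tau)(\widehat{j}_{q_0}-\widehat{j}_{p_0})}$ exactly when $\widehat{j}_{q_0}-\widehat{j}_{p_0} > k/5$. The genuine gap is in the pigeonhole: you cannot in general produce a matching-label pair with $\widehat{j}$-gap $> k/5$. The cross-block case is fine, but in the residual case there is no refined pigeonhole that works. Suppose block $F$ has $N_0^2$ indices colored by $\leq N_0^2/2$ labels; the labels could be $1,1,2,2,\dots,\lfloor N_0^2/2\rfloor,\lfloor N_0^2/2\rfloor$, so that every matching pair has index-gap $1$ and hence $\widehat{j}$-gap as small as $2\lceil 2/\tau\rceil$, far below the required $(8/5)N_0^2\lceil 2/\tau\rceil$. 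Lemma~\ref{l.excellent-words} gives no control over how the labels $\widehat{\beta}_{\widehat{j}_m}$ are distributed, so this adversarial clustering cannot be excluded, and the single-pair fiber estimate is simply too weak for small gaps.

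The paper's argument sidesteps this entirely: it never tries to force one pair to have a large gap. Instead it declares a pair $(p,q)$ \emph{bad} when $\#\pi_{p,q}(X) \leq N_0^{(1-10\tau)(\widehat{j}_q-\widehat{j}_p)}$, excludes the index interval $[p,q-1]$ for each bad pair, and bounds the \emph{total} excluded set by $< 2N_0^2$. The key is a multi-interval version of your fiber count: selecting a disjoint subfamily of bad intervals covering at least half the excluded set, one gets $\#X < N_0^{k - 10\tau S}$ where $S$ is the sum of the $\widehat{j}$-gaps over the subfamily. If the excluded set were $\geq 2N_0^2$, the minimum spacing $\widehat{j}_{m+1}-\widehat{j}_m \geq 2\lceil 2/\tau\rceil$ forces $S$ large enough to contradict $\#X > N_0^{(1-2\tau)k}$ given the choice $k = 8N_0^2\lceil 2/\tau\rceil$. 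With $\geq N_0^2+1$ non-excluded indices left, pigeonhole on the $N_0^2$ possible pair-labels yields two non-excluded indices $p_0 < q_0$ with matching labels, and non-exclusion of $p_0$ forces $(p_0,q_0)$ to be good with no constraint at all on the size of $\widehat{j}_{q_0}-\widehat{j}_{p_0}$. This accumulation over disjoint intervals is the idea your single-pair estimate is missing, and it is not optional: without it the residual case of your three-block scheme cannot be closed.
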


\begin{proof} For each pair of indices $(p,q)$, $1\leq p<q\leq 3N_0^2$ with $\#\pi_{p,q}(X)\leq N_0^{(1-10\tau)(\widehat{j}_q-\widehat{j}_p)}$, we exclude from $\{1,\dots,3N_0^2\}$ the interval $[p,q-1]\cap\mathbb{Z}:=\{p, p+1,\dots, q-1\}$.

We affirm that the cardinality of the set $\mathcal{Z}$ of excluded indices is $<2N_0^{2}$. Indeed, suppose by contradiction that $\#\mathcal{Z}\geq 2N_0^2$. By definition, $\mathcal{Z}$ is the union of the finite family of intervals $[p,q-1]$ with $\#\pi_{p,q}(X)\leq N_0^{(1-10\tau)(\widehat{j}_q-\widehat{j}_p)}$. Now, it is not hard to see that, given any finite family of intervals $\mathcal{F}$, there exists a subfamily $\mathcal{F}_0\subset\mathcal{F}$ of \emph{disjoint} intervals whose sum of lengths 
$$\sum\limits_{J\in\mathcal{F}_0}|J|$$ is at least \emph{half} of the measure of 
$$\bigcup\limits_{I\in\mathcal{F}}I$$
Applying this fact to the family of intervals defining $\mathcal{Z}$, it follows that we can select a subfamily $\mathcal{P}$ of pairs $(p,q)$ such that $\pi_{p,q}(X)\leq N_0^{(1-10\tau)(\widehat{j}_q-\widehat{j}_p)}$ leading to disjoint intervals $[p,q-1]$ whose sum of lengths 
$$\sum\limits_{(p,q)\in\mathcal{P}}(q-p)\geq \frac{1}{2}\#\mathcal{Z}.$$
Since $\widehat{j}_m$, $1\leq m\leq 3N_0^2$, were constructed so that $\widehat{j}_{m+1}-\widehat{j}_m\geq 2\lceil2/\tau\rceil$ (cf. Lemma \ref{l.excellent-words}) and we are assuming (by contradiction) that $\#\mathcal{Z}\geq 2N_0^2$, we deduce from the previous estimate that 
\begin{equation}\label{e.excluded-pairs-1}
\sum\limits_{(p,q)\in\mathcal{P}}(\widehat{j}_q-\widehat{j}_p)\geq2\lceil2/\tau\rceil\sum\limits_{(p,q)\in\mathcal{P}}(q-p)\geq 2\lceil2/\tau\rceil N_0^2
\end{equation}
On the other hand, since $\#\pi_{p,q}(X)\leq N_0^{(1-10\tau)(\widehat{j}_q-\widehat{j}_p)}$ for $(p,q)\in\mathcal{P}$, we get that 
$$
\#X<N_0^{(1-10\tau)\sum\limits_{(p,q)\in\mathcal{P}}(\widehat{j}_q-\widehat{j}_p)}\cdot N_0^{\#\{1\leq i\leq k\,:\, i\notin[\widehat{j}_p, \widehat{j}_q - 1] \,\forall\,(p,q)\in\mathcal{P}\}}
$$
because there are at most $N_0$ choices for $\beta_i\in\mathcal{B}_0$ for indices $i\notin\bigcup\limits_{(p,q)\in\mathcal{P}}[\widehat{j}_p,\widehat{j}_q - 1]$, $1\leq i\leq k$. By plugging \eqref{e.excluded-pairs-1} into the previous estimate, we would obtain that 
$$\#X<N_0^{(1-10\tau)\cdot 2N_0^2\lceil2/\tau\rceil}\cdot N_0^{k-2N_0^2\lceil2/\tau\rceil}=N_0^{k-20\tau N_0^2\lceil2/\tau\rceil}$$
However, by Lemma \ref{l.excellent-words}, we also have $\#X>N_0^{(1-2\tau)k}$, so that 
$$N_0^{(1-2\tau)k}<N_0^{k-20\tau N_0^2\lceil2/\tau\rceil},\quad \textrm{ i.e., } \quad 20 N_0^2\lceil2/\tau\rceil<2k,$$
a contradiction with our choice of $2k=16 N_0^2\lceil2/\tau\rceil<20N_0^2\lceil2/\tau\rceil$.

Once we know that the subset $\mathcal{Z}$ of excluded indices in $\{1,\dots,3N_0^2\}$ has cardinality $\#\mathcal{Z}<2N_0^2$, we conclude that there are at least $N_0^2+1$ non-excluded indices. Since for each $1\leq m\leq 3N_0^2$ there are at most $N_0$ possibilities for $\widehat{\beta}_{\widehat{j}_m}$, we deduce that there are two non-excluded indices $1\leq p_0<q_0\leq 3N_0^2$ such that $\widehat{\beta}_{\widehat{j}_{p_0}}=\widehat{\beta}_{\widehat{j}_{q_0}}$ and $\widehat{\beta}_{\widehat{j}_{p_0}+1}=\widehat{\beta}_{\widehat{j}_{q_0}+1}$. By definition of non-excluded index, $\mathcal{B}=\mathcal{B}_u:=\pi_{p_0,q_0}(X)$ satisfies the conclusions of the Lemma.
\end{proof}

At this point, we are ready to complete the proof of Proposition \ref{p.tA} by showing that the finite set $\mathcal{B}=\mathcal{B}_u=\pi_{p_0,q_0}(X)$ has the desired properties. 

\begin{proof}[Proof of Proposition \ref{p.tA}] Recall from Remark \ref{r.symmetry-du-ds} that our task is reduced to show that the complete shift $\Sigma(\mathcal{B})$ associated to $\mathcal{B}=\mathcal{B}_u$ generates a Cantor set $K^u(\mathcal{B})$ with Hausdorff dimension 
$$\textrm{dim}(K^u(\Sigma(\mathcal{B})))>(1-\eta)D_u(t)$$
and $\Sigma(\mathcal{B})\subset\Sigma_{t-\delta}$ for some $\delta>0$.  

We start by estimating the Hausdorff dimension of the Cantor set $K^u(\Sigma(\mathcal{B}_u))$ induced by $\mathcal{B}_u=\mathcal{B}$. 

Note that $K^u(\Sigma(\mathcal{B}_u))$ is a $C^{1+\varepsilon}$-dynamically defined Cantor set associated to certain iterates of $g_u$ defined on the intervals $I^u(\alpha)$, $\alpha\in\mathcal{B}$. In this situation, it is known from the usual bounded distortion property (cf. Remark \ref{r.bounded-distortion}) that the Hausdorff dimension and box counting dimensions of $K^u(\Sigma(\mathcal{B}))$ coincide and they satisfy  
$$\textrm{dim}(K^u(\Sigma(\mathcal{B})))\geq (1-\frac{\tau}{2})\frac{\log\#\mathcal{B}}{-\log(\min\limits_{\alpha\in\mathcal{B}}|I^u(\alpha)|)}$$ 
for $r_0$ sufficiently large. 

Since $\#\mathcal{B}> N_0^{(1-10\tau)(\widehat{j}_{q_0}-\widehat{j}_{p_0})}$ (cf. Lemma \ref{l.nice-cuts-of-excellent-words}) and, for some constant $c_1=c_1(\varphi,\Lambda)>0$, one has $|I^u(\alpha)|\geq e^{-(\widehat{j}_{q_0}-\widehat{j}_{p_0})(r_0+c_1)}$ (by the usual bounded distortion property cf. Remark \ref{r.bounded-distortion}) for each $\alpha\in\mathcal{B}$, we deduce from the previous estimate that 
$$\textrm{dim}(K^u(\Sigma(\mathcal{B})))>\frac{(1-\frac{\tau}{2})(1-10\tau)\log N_0}{r_0+c_1}.$$
Now, recall that $N_0=N_{\textbf{u}}(t,r_0)$ satisfy \eqref{e.r0-constraint} so that $\log N_0>(1-\tau/2)r_0 D_u(t)$. Hence, by plugging this into the previous inequality (and by recalling that $\tau=\eta/100$), we obtain
$$\textrm{dim}(K^u(\Sigma(\mathcal{B})))>\frac{(1-10\tau)(1-\frac{\tau}{2})^2 r_0}{r_0+c_1} D_u(t)>(1-12\tau)D_u(t)>(1-\eta)D_u(t)$$
for $r_0=r_0(\eta)\in\mathbb{N}$ sufficiently large.

Similarly, we also have that
$$\textrm{dim}(K^s(\Sigma(\mathcal{B}^T)))\geq (1-\frac{\tau}{2})\frac{\log\#\mathcal{B}}{-\log(\min\limits_{\alpha\in\mathcal{B}}|I^s(\alpha^T)|)}$$
Because $|I^s(\alpha^T)|$ is comparable to $|I^u(\alpha)|$ up to the multiplicative factor $e^{c_2}$ (cf. Remark \ref{r.stable-unstable-sizes}), we deduce from the computations of the previous paragraph that 
$$\textrm{dim}(K^s(\Sigma(\mathcal{B}^T)))>\frac{(1-10\tau)(1-\frac{\tau}{2})^2 r_0}{(r_0+c_1+c_2)} D_u(t)>(1-\eta)D_u(t)$$
for $r_0=r_0(\eta)\in\mathbb{N}$ sufficiently large.

At this point, it remains only to prove that $\Sigma(\mathcal{B})\subset\Sigma_{t-\delta}$ for some $\delta>0$. For this sake, we denote by 
$$\gamma_1:=\widehat{\beta}_{\widehat{j}_{p_0}+1}=\widehat{\beta}_{\widehat{j}_{q_0}+1}, \quad \gamma_2:=\widehat{\beta}_{\widehat{j}_{p_0}}=\widehat{\beta}_{\widehat{j}_{q_0}},\quad\textrm{ and } \quad \widehat{n}:=\widehat{j}_{q_0}-\widehat{j}_{p_0}$$
so that, by definition, the elements $\beta\in\mathcal{B}$ have the form
$$\beta=\gamma_1\beta_{\widehat{j}_{p_0}+2}\dots\beta_{\widehat{j}_{p_0}+\widehat{n}-1}\gamma_2$$
with $\gamma_1,\beta_{\widehat{j}_{p_0}+2},\dots,\beta_{\widehat{j}_{p_0}+\widehat{n}-1},\gamma_2\in\mathcal{B}_0$, $\widehat{n}:=\widehat{j}_{q_0}-\widehat{j}_{p_0}$, and 
$$\sup I^s((\gamma_1')^T)<\inf I^s(\gamma_1^T)\leq \sup I^s(\gamma_1^T)<\inf I^s((\gamma_1'')^T),$$
$$\sup I^u(\gamma_2')<\inf I^u(\gamma_2)\leq \sup I^u(\gamma_2)<\inf I^u(\gamma_2''),$$
for some words $\gamma_1',\gamma_1'',\gamma_2',\gamma_2''\in\mathcal{B}_0$ verifying 
$$I^u(\gamma_1'\beta_{\widehat{j}_{p_0}+2}\dots\beta_{\widehat{j}_{p_0}+\widehat{n}-1}\gamma_2\gamma_1)\cap K_t^u\neq\emptyset, \quad I^u(\gamma_1''\beta_{\widehat{j}_{p_0}+2}\dots\beta_{\widehat{j}_{p_0}+\widehat{n}-1}\gamma_2\gamma_1)\cap K_t^u\neq\emptyset,$$
$$I^u(\gamma_2\gamma_1\beta_{\widehat{j}_{p_0}+2}\dots\beta_{\widehat{j}_{p_0}+\widehat{n}-1}\gamma_2')\cap K_t^u\neq\emptyset, \quad I^u(\gamma_2\gamma_1\beta_{\widehat{j}_{p_0}+2}\dots\beta_{\widehat{j}_{p_0}+\widehat{n}-1}\gamma_2'')\cap K_t^u\neq\emptyset$$
In particular, an element of $\Sigma(\mathcal{B})$ has the form 
$$\underline{\theta}^{(1)}\gamma_2;\gamma_1\beta_{\widehat{j}_{p_0}+2}\dots\beta_{\widehat{j}_{p_0}+\widehat{n}-1}\gamma_2\gamma_1\underline{\theta}^{(2)}$$
where $\gamma_1\underline{\theta}^{(2)}\in\mathcal{A}^{\mathbb{N}}$ and $\underline{\theta}^{(1)}\gamma_2\in\mathcal{A}^{\mathbb{Z}^-}$ are infinite concatenations of elements of $\mathcal{B}$ and the symbol $;$ serves to mark the location of the entry of index $1$ of the bi-infinite sequence $\underline{\theta}^{(1)}\gamma_2;\gamma_1\beta_{\widehat{j}_{p_0}+2}\dots\beta_{\widehat{j}_{p_0}+\widehat{n}-1}\gamma_2\gamma_1\underline{\theta}^{(2)}$. 

In this notation, our task of showing that $\Sigma(\mathcal{B})\subset\Sigma_{t-\delta}$ for some $\delta>0$ is equivalent to show that
\begin{equation}\label{e.decreasing-f}f(\sigma^{\ell}(\underline{\theta}^{(1)}\gamma_2;\gamma_1\beta_{\widehat{j}_{p_0}+2}\dots\beta_{\widehat{j}_{p_0}+\widehat{n}-1}\gamma_2\gamma_1\underline{\theta}^{(2)}))\leq t-\delta
\end{equation}
for all $0\leq \ell\leq \widehat{m}_1+\widehat{m}+\widehat{m_2}-1$ where $\gamma_1:=a_1\dots a_{\widehat{m}_1}$, $\beta_{\widehat{j}_{p_0}+2}\dots\beta_{\widehat{j}_{p_0}+\widehat{n}-1}:=b_{1}\dots b_{\widehat{m}}$ and $\gamma_2:=d_{1}\dots d_{\widehat{m}_2}$.

We consider two regimes for $1\leq \ell\leq \widehat{m}_1+\widehat{m}+\widehat{m_2}-1$:
\begin{itemize}
\item[I)] $\widehat{m}_1\leq \ell\leq \widehat{m}_1+\widehat{m}-1$
\item[II)] $0\leq\ell\leq \widehat{m}_1-1$ or $\widehat{m}_1+\widehat{m}\leq\ell\leq \widehat{m}_1+\widehat{m}+\widehat{m}_2-1$.
\end{itemize}

In case I), we write $\ell=\widehat{m}_1-1+j$ so that 
\begin{equation}\label{e.sigma-l-B}\sigma^{\ell}(\underline{\theta}^{(1)}\gamma_2;\gamma_1\beta_{\widehat{j}_{p_0}+2}\dots\beta_{\widehat{j}_{p_0}+\widehat{n}-1}\gamma_2\gamma_1\underline{\theta}^{(2)}) = \underline{\theta}^{(1)}\gamma_2\gamma_1b_1\dots b_{j-1}; b_j\dots b_{\widehat{m}}\gamma_2\gamma_1\underline{\theta}^{(2)}
\end{equation}

We have two possibilities:
\begin{itemize}
\item[I.a)] $|I^s((\gamma_1b_1\dots b_{j-1})^T)|\leq |I^u(b_j\dots b_{\widehat{m}}\gamma_2)|$
\item[I.b)] $|I^u(b_j\dots b_{\widehat{m}}\gamma_2)|\leq |I^s((\gamma_1b_1\dots b_{j-1})^T)|$
\end{itemize}

In case I.a), we choose $\gamma_2^{\ast}\in\{\gamma_2', \gamma_2''\}$ such that 
$$f(\underline{\theta}^{(1)}\gamma_2\gamma_1b_1\dots b_{j-1}; b_j\dots b_{\widehat{m}}\gamma_2\gamma_1\underline{\theta}^{(2)})<f(\underline{\theta}^{(1)}\gamma_2\gamma_1b_1\dots b_{j-1}; b_j\dots b_{\widehat{m}}\gamma_2^{\ast}\underline{\theta}^{(4)})$$
for any $\underline{\theta}^{(4)}\in\mathcal{A}^{\mathbb{N}}$ (because of the local monotonicity of $f$ along stable and unstable manifolds). By \eqref{e.c1}, it follows that  
\begin{eqnarray*}
& &f(\underline{\theta}^{(1)}\gamma_2\gamma_1b_1\dots b_{j-1}; b_j\dots b_{\widehat{m}}\gamma_2\gamma_1\underline{\theta}^{(2)}) + c_6|I^u(b_j\dots b_{\widehat{m}}\gamma_2)| \\ & &< f(\underline{\theta}^{(1)}\gamma_2\gamma_1b_1\dots b_{j-1}; b_j\dots b_{\widehat{m}}\gamma_2^{\ast}\underline{\theta}^{(4)})
\end{eqnarray*}
for some $c_6=c_6(\varphi, f)>0$. On the other hand, by \eqref{e.c2}, we also know that, for some $c_7=c_7(\varphi, f)>0$, the function $f$ obeys the Lipschitz estimate 
\begin{eqnarray*}
& & |f(\underline{\theta}^{(3)}\gamma_2\gamma_1b_1\dots b_{j-1}; b_j\dots b_{\widehat{m}}\gamma_2^{\ast}\underline{\theta}^{(4)}) - f(\underline{\theta}^{(1)}\gamma_2\gamma_1b_1\dots b_{j-1}; b_j\dots b_{\widehat{m}}\gamma_2^{\ast}\underline{\theta}^{(4)})| \\ & & < c_7 |I^s((\gamma_2\gamma_1b_1\dots b_{j-1})^T)|
\end{eqnarray*}
for any $\underline{\theta}^{(3)}\in\mathcal{A}^{\mathbb{Z}^-}$. From these estimates, we obtain that 
$$f(\underline{\theta}^{(1)}\gamma_2\gamma_1b_1\dots b_{j-1}; b_j\dots b_{\widehat{m}}\gamma_2\gamma_1\underline{\theta}^{(2)})+ c_6|I^u(b_j\dots b_{\widehat{m}}\gamma_2)|<$$
$$f(\underline{\theta}^{(3)}\gamma_2\gamma_1b_1\dots b_{j-1}; b_j\dots b_{\widehat{m}}\gamma_2^{\ast}\underline{\theta}^{(4)}) + c_7 |I^s((\gamma_2\gamma_1b_1\dots b_{j-1})^T)|$$
for any $\underline{\theta}^{(3)}\in\mathcal{A}^{\mathbb{Z}^-}$ and $\underline{\theta}^{(4)}\in\mathcal{A}^{\mathbb{N}}$. Now, we observe that the usual bounded distortion property (cf. Remark \ref{r.bounded-distortion}) implies that  
$$|I^s((\gamma_2\gamma_1b_1\dots b_{j-1})^T)|\leq e^{c_1}|I^s(\gamma_2^T)|\cdot |I^s((\gamma_1b_1\dots b_{j-1})^T)|$$
for some $c_1=c_1(\varphi)>0$. By plugging this information into the previous estimate, we have 
 $$f(\underline{\theta}^{(1)}\gamma_2\gamma_1b_1\dots b_{j-1}; b_j\dots b_{\widehat{m}}\gamma_2\gamma_1\underline{\theta}^{(2)})+ c_6|I^u(b_j\dots b_{\widehat{m}}\gamma_2)|<$$
$$f(\underline{\theta}^{(3)}\gamma_2\gamma_1b_1\dots b_{j-1}; b_j\dots b_{\widehat{m}}\gamma_2^{\ast}\underline{\theta}^{(4)}) + c_7 e^{c_1} |I^s(\gamma_2^T)|\cdot |I^s((\gamma_1b_1\dots b_{j-1})^T)|$$
Since we are dealing with case I.a), i.e., $|I^s((\gamma_1b_1\dots b_{j-1})^T)|\leq |I^u(b_j\dots b_{\widehat{m}}\gamma_2)|$, we deduce that 
$$f(\underline{\theta}^{(1)}\gamma_2\gamma_1b_1\dots b_{j-1}; b_j\dots b_{\widehat{m}}\gamma_2\gamma_1\underline{\theta}^{(2)})<$$
$$f(\underline{\theta}^{(3)}\gamma_2\gamma_1b_1\dots a_{j-1}; b_j\dots b_{\widehat{m}}\gamma_2^{\ast}\underline{\theta}^{(4)}) - (c_6- c_7 e^{c_1} |I^s(\gamma_2^T)|)\cdot |I^u(b_j\dots b_{\widehat{m}}\gamma_2)|$$
Next, we note that $c_7 e^{c_1}|I^s(\gamma_2^T)|<c_6/2$ if $r_0=r_0(\varphi, f)\in\mathbb{N}$ is sufficiently large: indeed, since $\gamma_2\in\mathcal{B}_0=\mathcal{C}_u(t,r_0)$, we know that $|I^u(\gamma_2)|\leq e^{-r_0}$; on the other hand, the usual bounded distortion property ensures that $|I^s(\gamma_2^T)|\leq |I^u(\gamma_2)|^{c_8}$ for some constant $c_8=c_8(\varphi)>0$, so that $c_7 e^{c_1}|I^s(\gamma_2^T)|<c_6/2$ for $r_0>(1/c_8)\log(2c_7e^{c_1}/c_6)$. In particular, for $r_0=r_0(\varphi, f, t, \eta)\in\mathbb{N}$ sufficiently large, we have that 
\begin{eqnarray}\label{e.decreasing-f-caseIa}f(\underline{\theta}^{(1)}\gamma_2\gamma_1b_1\dots b_{j-1}; b_j\dots b_{\widehat{m}}\gamma_2\gamma_1\underline{\theta}^{(2)})< \\
f(\underline{\theta}^{(3)}\gamma_2\gamma_1b_1\dots b_{j-1}; b_j\dots b_{\widehat{m}}\gamma_2^{\ast}\underline{\theta}^{(4)}) - (c_6/2)\cdot |I^u(b_j\dots b_{\widehat{m}}\gamma_2)|\nonumber
\end{eqnarray}
for any $\underline{\theta}^{(3)}\in\mathcal{A}^{\mathbb{Z}^-}$ and $\underline{\theta}^{(4)}\in\mathcal{A}^{\mathbb{N}}$. Now, we recall that $\gamma_2^*\in\{\gamma_2', \gamma_2''\}$, so that 
$$I^u(\gamma_2\gamma_1\beta_{\widehat{j}_{p_0}+2}\dots\beta_{\widehat{j}_{p_0}+\widehat{n}-1}\gamma_2^*)\cap K_t^u\neq\emptyset.$$
By definition, this means that there are $\underline{\theta}^{(3)}_*\in\mathcal{A}^{\mathbb{Z}^-}$ and $\underline{\theta}^{(4)}_*\in\mathcal{A}^{\mathbb{N}}$ with 
$$\underline{\theta}^{(3)}_*;\gamma_2\gamma_1\beta_{\widehat{j}_{p_0}+2}\dots\beta_{\widehat{j}_{p_0}+\widehat{n}-1}\gamma_2^*\underline{\theta}^{(4)}_*\in\Sigma_t,$$
and, \emph{a fortiori}, 
$$f(\sigma^{\widehat{m}_2+\ell}(\underline{\theta}^{(3)}_*;\gamma_2\gamma_1b_1\dots b_{\widehat{m}}\gamma_2^{\ast}\underline{\theta}^{(4)}_*))=f(\underline{\theta}^{(3)}_*\gamma_2\gamma_1b_1\dots b_{j-1}; b_j\dots b_{\widehat{m}}\gamma_2^{\ast}\underline{\theta}^{(4)}_*)\leq t.$$
Here, we used \eqref{e.sigma-l-B} for the first equality. Combining this with \eqref{e.decreasing-f-caseIa}, we see that 
$$f(\underline{\theta}^{(1)}\gamma_2\gamma_1b_1\dots b_{j-1}; b_j\dots b_{\widehat{m}}\gamma_2\gamma_1\underline{\theta}^{(2)}) < t - (c_6/2)\cdot |I^u(b_j\dots b_{\widehat{m}}\gamma_2)|.$$
Therefore, in case I.a), we conclude that 
\begin{equation}\label{e.delta1-definition}
f(\sigma^{\ell}(\underline{\theta}^{(1)}\gamma_2;\gamma_1\beta_{\widehat{j}_{p_0}+2}\dots\beta_{\widehat{j}_{p_0}+\widehat{n}-1}\gamma_2\gamma_1\underline{\theta}^{(2)})) < t - \delta_1
\end{equation}
where 
$$\delta_1:=\frac{c_6}{2}\min\limits_{\gamma_1 b_1\dots b_{\widehat{m}}\in\mathcal{B}}\,\,\, \min\limits_{1\leq j\leq \widehat{m}-1}\,\,\, |I^u(b_j\dots b_{\widehat{m}}\gamma_2)| 
> 0$$

The case I.b) is dealt with in a symmetric manner: in fact, by mimicking the argument above for case I.a), one gets that in case I.b) 
\begin{equation}\label{e.delta2-definition}
f(\sigma^{\ell}(\underline{\theta}^{(1)}\gamma_2;\gamma_1\beta_{\widehat{j}_{p_0}+2}\dots\beta_{\widehat{j}_{p_0}+\widehat{n}-1}\gamma_2\gamma_1\underline{\theta}^{(2)})) < t - \delta_2
\end{equation}
where 
$$\delta_2:=\frac{c_6}{2}\min\limits_{\gamma_1 b_1\dots b_{\widehat{m}}\in\mathcal{B}}\,\,\, \min\limits_{1\leq j\leq \widehat{m}-1}\,\,\, |I^s((\gamma_1b_1\dots b_{j-1})^T)| 
> 0$$

Finally, the case II) is dealt with in a similar way to case I.a). We write 
\begin{eqnarray}\label{e.sigma-l-A}
& & \sigma^{\ell}(\underline{\theta}^{(1)}\gamma_2;\gamma_1\beta_{\widehat{j}_{p_0}+2}\dots\beta_{\widehat{j}_{p_0}+\widehat{n}-1}\gamma_2\gamma_1\underline{\theta}^{(2)}) = \\ & & \underline{\theta}^{(1)}\gamma_2 a_1\dots a_{\ell}; a_{\ell+1}\dots a_{\widehat{m}}\beta_{\widehat{j}_{p_0}+2}\dots\beta_{\widehat{j}_{p_0}+\widehat{n}-1}\gamma_2\gamma_1\underline{\theta}^{(2)} \nonumber
\end{eqnarray}
for $0\leq\ell\leq\widehat{m}_1-1$, and 
\begin{eqnarray}\label{e.sigma-l-D}
& & \sigma^{\ell}(\underline{\theta}^{(1)}\gamma_2;\gamma_1\beta_{\widehat{j}_{p_0}+2}\dots\beta_{\widehat{j}_{p_0}+\widehat{n}-1}\gamma_2\gamma_1\underline{\theta}^{(2)}) = \\ & & \underline{\theta}^{(1)}\gamma_2 \gamma_1 \beta_{\widehat{j}_{p_0}+2}\dots\beta_{\widehat{j}_{p_0}+\widehat{n}-1}d_1\dots d_{\ell-\widehat{m_1}-\widehat{m}}; d_{\ell-\widehat{m_1}-\widehat{m}+1}\dots d_{\widehat{m}_2}\gamma_1\underline{\theta}^{(2)} \nonumber
\end{eqnarray}
for $\widehat{m}_1+\widehat{m}\leq\ell\leq \widehat{m}_1+\widehat{m}+\widehat{m}_2-1$.

Since $\widehat{j}_{q_0}-\widehat{j}_{p_0}\geq 2\lceil2/\tau\rceil(q_0-p_0)\geq 2\lceil2/\tau\rceil$ and $\gamma_1,\beta_{\widehat{j}_{p_0}+1},\dots, \beta_{\widehat{j}_{q_0}-1},\gamma_2\in\mathcal{B}_0=\mathcal{C}_{\textbf{u}}(t,r_0)$, we see from the usual bounded distortion property (cf. Remark \ref{r.bounded-distortion}) that, if $r_0$ is sufficiently large, then  
$$|I^u(a_{\ell+1}\dots a_{\widehat{m}}\beta_{\widehat{j}_{p_0}+2}\dots\beta_{\widehat{j}_{p_0}+\widehat{n}-1}\gamma_2)|\leq |I^s((\gamma_2 a_1\dots a_{\ell})^T)|$$
for $0\leq\ell\leq\widehat{m}_1-1$, and 
$$|I^s((\gamma_1 \beta_{\widehat{j}_{p_0}+2}\dots\beta_{\widehat{j}_{p_0}+\widehat{n}-1}d_1\dots d_{\ell-\widehat{m_1}-\widehat{m}})^T)|\leq |I^u(d_{\ell-\widehat{m_1}-\widehat{m}+1}\dots d_{\widehat{m}_2}\gamma_1)|$$
for $\widehat{m}_1+\widehat{m}\leq\ell\leq \widehat{m}_1+\widehat{m}+\widehat{m}_2-1$. By plugging this into the argument for case I.a), one deduces that 
\begin{equation}\label{e.delta3-definition}
f(\sigma^{\ell}(\underline{\theta}^{(1)}\gamma_2;\gamma_1\beta_{\widehat{j}_{p_0}+2}\dots\beta_{\widehat{j}_{p_0}+\widehat{n}-1}\gamma_2\gamma_1\underline{\theta}^{(2)}))<t-\delta_3
\end{equation}
for $0\leq\ell\leq\widehat{m}_1-1$, and 
\begin{equation}\label{e.delta4-definition}
f( \sigma^{\ell}(\underline{\theta}^{(1)}\gamma_2;\gamma_1\beta_{\widehat{j}_{p_0}+2}\dots\beta_{\widehat{j}_{p_0}+\widehat{n}-1} 
\gamma_2\gamma_1\underline{\theta}^{(2)}))<t-\delta_4
\end{equation}
for $\widehat{m}_1+\widehat{m}\leq\ell\leq \widehat{m}_1+\widehat{m}+\widehat{m}_2-1$, where 
$$\delta_3:=\frac{c_6}{2}\min\limits_{\gamma_1 b_1\dots b_{\widehat{m}}\in\mathcal{B}}\,\,\, \min\limits_{1\leq \ell\leq \widehat{m}_1-1}\,\,\, |I^s((\gamma_2 a_1\dots a_{\ell})^T)|
> 0$$
and 
$$\delta_4:=\frac{c_6}{2}\min\limits_{\gamma_1 b_1\dots b_{\widehat{m}}\in\mathcal{B}}\,\,\, \min\limits_{1\leq \ell\leq \widehat{m}_1-1}\,\,\, |I^u(d_{\ell-\widehat{m_1}-\widehat{m}+1}\dots d_{\widehat{m}_2}\gamma_1)|
> 0$$

In summary, from \eqref{e.delta1-definition}, \eqref{e.delta2-definition}, \eqref{e.delta3-definition}, and \eqref{e.delta4-definition}, by setting 
$$\delta:=\min\{\delta_1, \delta_2, \delta_3, \delta_4\}>0,$$
we deduce that \eqref{e.decreasing-f} holds, i.e., 
$$f(\sigma^{\ell}(\underline{\theta}^{(1)}\gamma_2;\gamma_1\beta_{\widehat{j}_{p_0}+2}\dots\beta_{\widehat{j}_{p_0}+\widehat{n}-1}\gamma_2\gamma_1\underline{\theta}^{(2)}))<t-\delta$$
for $0\leq\ell\leq \widehat{m}_1+\widehat{m}+\widehat{m}_2-1$. In other terms, we showed that $\Sigma(\mathcal{B})\subset\Sigma_{t-\delta}$. 

Finally, note that the facts $\Sigma(\mathcal{B}_u)\subset\Sigma_{t-\delta}$ and $\textrm{dim}(K^u(\Sigma(\mathcal{B}_u)))>(1-\eta)D_u(t)$ just established imply that $$D_u(t)\geq d_u(t)\geq d_u(t-\delta)\geq \textrm{dim}(K^u(\Sigma(\mathcal{B})))>(1-\eta) D_u(t).$$  
Since $\eta>0$ is arbitrary, we obtain that $d_u(t)=D_u(t)$. Similarly, by considering $\mathcal{B}_s$, we also have $d_s(t)=D_s(t)$. Furthermore, $\textrm{dim}(K^s(\Sigma(\mathcal{B}_u^T)))>(1-\eta)D_u(t)$ and $\textrm{dim}(K^u(\Sigma(\mathcal{B}_s^T)))>(1-\eta)D_s(t)$ imply that  
$$D_u(t)\geq D_u(t-\delta)\geq \textrm{dim}(K^u(\Sigma(\mathcal{B}_s^T)))>(1-\eta)D_s(t)$$ and 
$$D_s(t)\geq D_s(t-\delta)\geq \textrm{dim}(K^s(\Sigma(\mathcal{B}_u^T)))>(1-\eta)D_u(t),$$
so that $D_u(t)=D_s(t)$. This completes the proof of Proposition \ref{p.tA}.
\end{proof}

\subsection{Approximation of Lagrange spectrum by images of ``subhorseshoes''}

\begin{proposition}\label{p.horseshoe-extraction} Let $f\in\mathcal{R}_{\varphi,\Lambda}$,  $\Sigma(\mathcal{B})\subset\Sigma\subset\mathcal{A}^{\mathbb{Z}}$ be a complete subshift associated to a finite alphabet $\mathcal{B}$ of finite words on $\mathcal{A}$, and $\Lambda(\Sigma(\mathcal{B}))\subset\Lambda$ the subhorseshoe of $\Lambda$ associated to $\mathcal{B}$. 

Then, for each $\varepsilon>0$, there exists a subhorseshoe $\Lambda(\mathcal{B},\varepsilon)\subset \Lambda(\Sigma(\mathcal{B}))$, a rectangle $R(\mathcal{B},\varepsilon)$ of some Markov partition of $\Lambda(\mathcal{B},\varepsilon)$, a $C^1$-diffeomorphism $\mathfrak{A}$ defined in a neighborhood of $R(\mathcal{B},\varepsilon)$ respecting the local stable and unstable foliations, and an integer $j(\mathcal{B},\varepsilon)\in\mathbb{Z}$ such that  $\textrm{dim}(\Lambda(\mathcal{B},\varepsilon))\geq (1-\varepsilon) \textrm{dim}(\Lambda(\Sigma(\mathcal{B})))$ and 
$$f(\varphi^{j(\mathcal{B},\varepsilon)}(\mathfrak{A}(\Lambda(\mathcal{B},\varepsilon)\cap R(\mathcal{B},\varepsilon))))\subset\ell_{\varphi, f}(\Lambda(\Sigma(\mathcal{B})))$$
\end{proposition}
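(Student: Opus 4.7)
The approach follows Moreira's strategy of realizing ``Markov-type'' values (suprema along orbits) as genuine Lagrange values ($\limsup$ along orbits): given any value $v := f(\varphi^j(A(z)))$ attached to a point of the subhorseshoe, we construct an orbit whose forward itinerary approximates the configuration at $y := \varphi^j(A(z))$ infinitely often with increasing precision, while $f$ at all other positions of the orbit stays strictly below $v$ with a uniform gap.

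First, I would define $\Lambda(\mathcal{B}, \varepsilon)$ by fixing a ``marker'' word $\beta_\star \in \mathcal{B}$ and a large integer $N = N(\varepsilon)$, and letting $\Lambda(\mathcal{B}, \varepsilon)$ be the subhorseshoe whose itineraries factor as concatenations of length-$N$ blocks over $\mathcal{B}$ beginning and ending with $\beta_\star$. By bounded distortion (Remark~\ref{r.bounded-distortion}), the entropy cost of prescribing two positions per block is of order $1/N$, so for $N$ large we obtain $\textrm{dim}(\Lambda(\mathcal{B}, \varepsilon)) \geq (1-\varepsilon)\textrm{dim}(\Lambda(\Sigma(\mathcal{B})))$. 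I take $R(\mathcal{B}, \varepsilon)$ to be the Markov rectangle corresponding to $\beta_\star$ appearing at position zero, and construct $A$ as a composition of local stable and unstable holonomies identifying $R(\mathcal{B}, \varepsilon)$ with a rectangle $R' \subset \Lambda$; such an $A$ automatically respects the local foliations. The integer $j$ is chosen so that $\varphi^j(A(R(\mathcal{B}, \varepsilon)))$ sits inside a distinguished rectangle where the $f$-values interface cleanly with the global shift structure.

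Given $z \in \Lambda(\mathcal{B}, \varepsilon) \cap R(\mathcal{B}, \varepsilon)$, set $y = \varphi^j(A(z))$ and $v = f(y)$. I would construct $x \in \Lambda(\Sigma(\mathcal{B}))$ with itinerary $\dots w_{-1} w_0 w_1 \dots$ where $w_0$ contains a long central segment reproducing $y$'s itinerary around its center (so \eqref{e.c2} forces $f(x)$ close to $v$); each $w_k$ for $k \neq 0$ is a ``buffer'' block drawn from a reservoir of admissible concatenations of words in $\mathcal{B}$ whose $f$-values are strictly below $v$; and the positive-index blocks $w_k$ contain increasingly long central segments matching $y$'s center, so that $f(\varphi^{n_k}(x)) \to v$ along a sequence of shift times $n_k \to +\infty$. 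This gives $\ell_{\varphi, f}(x) = v$ by construction. The main obstacle is producing a buffer reservoir with a \emph{uniform} spectral gap $f < v - \delta$: the reservoir must be large enough that its use at all non-central positions does not destroy admissibility or the dimension bound on $\Lambda(\mathcal{B}, \varepsilon)$, and yet its $f$-values must stay quantifiably below $v$. The non-degeneracy hypothesis $f \in \mathcal{R}_{\varphi, \Lambda}$, combined with the strict monotonicity estimate \eqref{e.c1}, is decisive here: every admissible deviation from $y$'s central itinerary forces a change in $f$ bounded below by a constant times the relevant interval size $|I^u(\cdot)|$ or $|I^s(\cdot)|$, yielding the required spectral gap and simultaneously allowing the reservoir to be assembled without sacrificing dimension.
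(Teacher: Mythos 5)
Your overall strategy --- replicate the interesting central configuration infinitely often with increasing precision while filling the remaining positions with buffers whose $f$-values stay strictly below the target --- is the correct one and matches the spirit of the paper's argument. However, two essential ingredients are missing, and you have flagged only one of them.

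First, the spectral gap for the buffer reservoir. Constructing $\Lambda(\mathcal{B},\varepsilon)$ by pinning a fixed marker $\beta_\star$ at the endpoints of length-$N$ blocks controls the dimension loss, but it says nothing about the $f$-values on the buffers, which you yourself identify as the crux. Invoking $f\in\mathcal{R}_{\varphi,\Lambda}$ together with the lower estimate \eqref{e.c1} gives a quantitative lower bound on the \emph{change} of $f$ under an itinerary modification, but it does not by itself produce a uniform \emph{upper} bound $f<v-\delta$ on the buffers: a deviation from the central configuration could just as well increase $f$. The paper's mechanism is different and decisive: since $f\in\mathcal{R}_{\varphi,\Lambda}$ is locally monotone along stable and unstable directions, it attains its maximum on $\Lambda(\Sigma(\mathcal{B}))$ at only finitely many points $x_1,\dots,x_k$; one then removes from the alphabet $\mathcal{B}^m$ the $k$ words $\gamma^{(j,m)}$ carrying these maxima, so that the subhorseshoe $\Lambda_1(\mathcal{B},\varepsilon)$ over the remaining alphabet $\mathcal{B}^*$ is contained in a compact set $\overline{U}$ avoiding neighborhoods of the $x_j$, and compactness yields $\max f|_{\overline{U}}<\max f|_{\Lambda(\Sigma(\mathcal{B}))}-\eta$. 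That $\eta$ is the uniform gap. The removal simultaneously produces the reservoir, the gap, and (for $m$ large) the dimension bound; the $\beta_\star$-marker device does none of this. Relatedly, your $A$ should not be a mere holonomy identification: in the paper $A$ is the extension of the map $\underline{\theta}$ that \emph{inserts} the removed maximizer block $\gamma^{(1,m)}$, flanked by a fixed periodic buffer, into the center of the itinerary --- this insertion is precisely what forces the Markov value of $\underline{\theta}(z)$ to be attained inside the central window.

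Second, the uniformity of $j(\mathcal{B},\varepsilon)$. The proposition requires a single integer $j$ that works for every $z\in\Lambda(\mathcal{B},\varepsilon)\cap R(\mathcal{B},\varepsilon)$, yet the position along the orbit of $\underline{\theta}(z)$ where $f$ attains $m_{\varphi,f}(\underline{\theta}(z))$ a priori depends on $z$; your phrase ``$j$ is chosen so that $\varphi^j(A(R(\mathcal{B},\varepsilon)))$ sits inside a distinguished rectangle'' asserts rather than proves this. The paper handles it explicitly: after establishing $\ell_{\varphi,f}(\theta^*(x))=f(\sigma^n(\underline{\theta}(x)))$ for some $|n|\le n_0$ (the bounded window around the inserted block), it decomposes $\Lambda_1(\mathcal{B},\varepsilon)\cap R(\mathcal{B},\varepsilon)$ as the finite union of closed sets indexed by $n$, and by a Baire-category argument selects one with nonempty interior, inside which a subhorseshoe with constant $n=j(\mathcal{B},\varepsilon)$ lives. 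Without some such step, the asserted inclusion $f(\varphi^{j}(A(\Lambda(\mathcal{B},\varepsilon)\cap R(\mathcal{B},\varepsilon))))\subset\ell_{\varphi,f}(\Lambda(\Sigma(\mathcal{B})))$ cannot be concluded for a fixed $j$.
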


\begin{proof} Note that $f\in\mathcal{R}_{\varphi, \Lambda}$ attains its maximal value on  $\Lambda(\Sigma(\mathcal{B}))$ at finitely many points $x_1, \dots, x_k\in\Lambda(\Sigma(\mathcal{B}))$: this happens because $f\in\mathcal{R}_{\varphi,\Lambda}$ implies that $f$ is locally monotone on stable and unstable manifolds. 

For each $n\in\mathbb{N}$, denote by $\mathcal{B}^n:=\{(\beta_{-n},\dots,\beta_{-1};\beta_0,\dots,\beta_n): \beta_i\in\mathcal{B}, i=1,\dots n\}$.

Given $\varepsilon>0$, we consider the symbolic sequences $(\beta_n^{(j)})_{n\in\mathbb{Z}}\in\mathcal{B}^{\mathbb{Z}}$ associated to $x_j$, $j=1,\dots, k$ and we take\footnote{Compare with Lemma 6 in \cite{MoRo}.} $m=m(\varepsilon, k)\in\mathbb{N}$ large enough such that $m>k^2$ and the subhorseshoe $\Lambda(\mathcal{B},\varepsilon):=\Lambda(\Sigma(\mathcal{B}^*))$ of $\Lambda(\Sigma(\mathcal{B}))$ associated to the complete subshift generated by the alphabet 
$$\mathcal{B}^*:=\mathcal{B}^m -\{\gamma^{(j,m)}:=(\beta_{-m}^{(j)},\dots, \beta_{-1}^{(j)};\beta_0^{(j)},\beta_1^{(j)},\dots,\beta_m^{(j)}): j=1,\dots, k\}$$ 
has Hausdorff dimension 
$$\textrm{dim}(\Lambda(\mathcal{B},\varepsilon))\geq(1-\varepsilon)\textrm{dim}(\Lambda(\Sigma(\mathcal{B}))).$$ 

Denote by $R_a$ the rectangles of the Markov partition of $\Lambda(\Sigma(\mathcal{B}^m))(=\Lambda(\Sigma(\mathcal{B})))$ induced by $\mathcal{B}^m$. By construction, we can find an open set $U\subset M$ such that $\overline{U}$ is disjoint from the Markov rectangles $R_j$ associated to $\gamma_m^{(j)}\in\mathcal{B}^m$, $j=1, \dots, k$, $U\cap\Lambda(\Sigma(\mathcal{B})) = \Lambda(\Sigma(\mathcal{B}))-\bigcup\limits_{j=1}^k R_j$, and 
$\Lambda(\mathcal{B},\varepsilon)= \bigcap\limits_{n\in\mathbb{Z}}\varphi^n(U)$. In particular, there exists $\eta>0$ such that $\max f(\overline{U}) < \min f(\cup_{j=1}^k R_j)-\eta$. Hence, if we take an appropriate periodic point in $\Lambda(\mathcal{B},\varepsilon)$ with a symbolic sequence $\overline{d} = (\dots, d_{\widehat{m}}, d_{-\widehat{m}},\dots,d_{-1};d_0,\dots, d_{\widehat{m}}, d_{-\widehat{m}},\dots)$, then the sequences of the form  
$$\underline{\theta}=(\dots, \beta_{-m-2},\beta_{-m-1},d_{-\widehat{m}},\dots, d_{\widehat{m}}, \gamma^{(1,m)}, d_{-\widehat{m}},\dots, d_{\widehat{m}}, \beta_{m+1},\beta_{m+2}, \dots)$$ 
with $\beta_j\in\mathcal{B}^*$, $|j|>m$, have the property that $m_{\varphi, f}(\underline{\theta})=\sup\limits_{n\in\mathbb{Z}} f(\sigma^n(\underline{\theta}))$ is attained for values of $|n|\leq n_0$ corresponding to the piece $\gamma^{(1,m)}$ and, moreover, $f(\sigma^n(\underline{\theta})) < m_{\varphi, f}(\underline{\theta})-\eta$ whenever $|n|>n_0$ does not correspond to $\gamma^{(1,m)}$. Furthermore, $m_{\varphi, f}(\underline{\theta}') < m_{\varphi, f}(\underline{\theta})-\eta$ for all $\underline{\theta}'\in\Sigma(\mathcal{B}^*)$ and $\underline{\theta}$ as above. 

Next, we denote by $R_1(\mathcal{B}, \varepsilon):=R_{\overline{d}}$ the Markov rectangle associated to the periodic orbit $\overline{d}$ and we define two maps $\theta^*$ and $\underline{\theta}$ from $\Lambda(\mathcal{B},\varepsilon)\cap R_1(\mathcal{B}, \varepsilon)$ to $\Lambda$ in the following way. Given $x\in \Lambda(\mathcal{B},\varepsilon)\cap R_1(\mathcal{B}, \varepsilon)$, let us denote by the corresponding sequence in $\Sigma(\mathcal{B})$ by $(\dots, \gamma_{-1}, d_{-\widehat{m}},\dots; d_0,\dots,  d_{\widehat{m}}, \gamma_1, \dots)$ where $\gamma_j\in\mathcal{B}^*$ for all $j\in\mathbb{Z}$. We set  
$$\tau^{(j)} := (\gamma_{-|j|}, \dots, \gamma_{-1}, d_{-\widehat{m}},\dots, d_{\widehat{m}}, \gamma^{(1,m)}, d_{-\widehat{m}},\dots, d_{\widehat{m}}, \gamma_1, \dots, \gamma_{|j|})$$
for each $j\in\mathbb{Z}$, and we introduce the point $\theta^*(x)$, resp. $\underline{\theta}(x)$, of $\Lambda(\Sigma(\mathcal{B}))$ associated to the sequence 
$$(\dots, \tau^{(-2)}, \tau^{(-1)};\tau^{(1)}, \tau^{(2)},\dots),$$
resp.
$$(\dots, \gamma_{-2},\gamma_{-1},d_{-\widehat{m}},\dots, d_{\widehat{m}}, \gamma^{(1,m)}, d_{-\widehat{m}},\dots, d_{\widehat{m}}, \gamma_{1},\gamma_{2}, \dots)$$

Observe that\footnote{Compare with Lemma 5 in \cite{MoRo}.} the map $\underline{\theta}: \Lambda(\mathcal{B},\varepsilon)\cap R_1(\mathcal{B},\varepsilon)\to \Lambda(\Sigma(B))$ extends to a $C^1$ diffeomorphism $\mathfrak{A}$ on a neighborhood of $R_1(\mathcal{B},\varepsilon)$ respecting the stable and unstable foliations. 

Given $x\in\Lambda(\mathcal{B},\varepsilon)\cap R_1(\mathcal{B},\varepsilon)$, we affirm that 
$$\ell_{\varphi, f}(\theta^*(x)) = f(\sigma^n(\underline{\theta}(x)))$$
for some $|n|\leq n_0$. 

Indeed, we note that $\underline{\theta}(x)$ was constructed in such a way that $m_{\varphi, f}(\underline{\theta}(x)) = f(\sigma^n(\underline{\theta}(x)))$ for some $|n|\leq n_0$. Thus, we have a sequence $(n_k)_{k\in\mathbb{Z}}$ with $n_k$ corresponding to a position in the piece $\tau^{(k)}$ of $\theta^*(x)$ so that $\sigma^{n_k}(\theta^*(x))$ converges to $\sigma^n(\underline{\theta}(x))$ and, \emph{a fortiori}, 
\begin{equation}\label{e.Lagrange-lb}
\ell_{\varphi, f}(\theta^*(x))\geq m_{\varphi, f}(\underline{\theta}(x)) = f(\sigma^n(\underline{\theta}(x))).
\end{equation}
Also, if $(m_k)_{k\in\mathbb{N}}$ and $(r_k)_{k\in\mathbb{N}}$ are sequences where $m_k$ are positions in  pieces $\tau^{(r_k)}$ of $\theta^*(x)$ with the property that 
$$\ell_{\varphi, f}(\theta^*(x)) = \lim\limits_{k\to\infty} f(\sigma^{m_k}(\theta^*(x))),$$ 
then one of the following two possibilities occur: the sequence $|m_k-n_{r_k}|$ has a bounded subsequence or the sequence $|m_k-n_{r_k}|$ is unbounded. In the former case, we can find $b\in\mathbb{Z}$ such that $\sigma^{m_k}(\theta^*(x))$ has a subsequence converging to $\sigma^b(\underline{\theta}(x))$ and, hence, $$\ell_{\varphi, f}(\theta^*(x))=\lim\limits_{k\to\infty} f(\sigma^{m_k}(\theta^*(x)))=f(\sigma^b(\underline{\theta}(x)))\leq m_{\varphi, f}(\underline{\theta}(x)) = f(\sigma^n(\underline{\theta}(x)))$$ 
In the latter case, there exists a subsequence of $\sigma^{m_k}(\theta^*(x))$ converging to an element $\theta'\in \Sigma(\mathcal{B}^*)$, but this is a contradiction with \eqref{e.Lagrange-lb} because 
$$\ell_{\varphi, f}(\theta^*(x)) = \lim\limits_{k\to\infty} f(\sigma^{m_k}(\theta^*(x)))=f(\theta')\leq  m_{\varphi, f}(\theta') < m_{\varphi, f}(\underline{\theta}(x))-\eta$$
This proves our claim. 
    
It follows from our claim that 
$$\Lambda(\mathcal{B}, \varepsilon)\cap R_1(\mathcal{B}, \varepsilon) = 
\bigcup\limits_{j=-n_0}^{n_0}\Lambda(\mathcal{B}, \varepsilon, n)$$
where $\Lambda(\mathcal{B}, \varepsilon, n) = \{x\in\Lambda(\mathcal{B}, \varepsilon)\cap R_1(\mathcal{B}, \varepsilon): \ell_{\varphi, f}(\theta^*(x)) = f(\sigma^n(\underline{\theta}(x)))\}$. Therefore, one of the closed subsets $\Lambda(\mathcal{B}, \varepsilon, j(\mathcal{B}, \varepsilon))$ has non-empty interior in $\Lambda(\mathcal{B}, \varepsilon)$ (for some $|j(\mathcal{B}, \varepsilon)|\leq n_0$) and, \emph{a fortiori}, we can choose a rectangle of $R(\mathcal{B}, \varepsilon)$ of some Markov partition of $\Lambda(\mathcal{B}, \varepsilon)$ such that 
$$\Lambda(\mathcal{B}, \varepsilon)\cap R(\mathcal{B}, \varepsilon)\subset \Lambda(\mathcal{B}, \varepsilon, j(\mathcal{B}, \varepsilon))$$ 
This completes the proof of the proposition. 
\end{proof}

\subsection{Lower semicontinuity of $D_u(t)$ and $D_s(t)$}

Besides Proposition \ref{p.tA} and \ref{p.horseshoe-extraction}, our proof of the lower semicontinuity of $D_u(t)$ uses the dimension formula in \cite{Mo}:

\begin{theorem}\label{t.dimension-formula} There exists a Baire residual subset $\mathcal{U}^{**}\subset \mathcal{U}$ such that for any $g\in\mathcal{R}_{\varphi, \Lambda}$ and for every subhorseshoe $\widetilde{\Lambda}\subset\Lambda$ one has 
$$\textrm{dim}(g(\widetilde{\Lambda})) = \textrm{dim}(\widetilde{\Lambda})$$
\end{theorem}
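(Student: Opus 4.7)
The upper bound $\dim(g(\widetilde{\Lambda})) \leq \dim(\widetilde{\Lambda})$ would be immediate from the fact that $g$ is Lipschitz on $\Lambda$. The core of the problem is thus the reverse inequality, which must be established on a Baire residual set of conservative perturbations of $\varphi_0$.

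My plan is to reduce the statement to a generic dimension formula for sums of dynamically defined Cantor sets. Locally, every subhorseshoe has a product structure: $\widetilde{\Lambda} \cap R_a$ is bi-Lipschitz homeomorphic to $\widetilde{K}^s_a \times \widetilde{K}^u_a$, where $\widetilde{K}^s_a, \widetilde{K}^u_a$ are the stable and unstable Cantor sets of $\widetilde{\Lambda}$ intersected with $R_a$. In particular, $\dim(\widetilde{\Lambda}) = \dim(\widetilde{K}^s) + \dim(\widetilde{K}^u)$, valid because $\dim(\widetilde{\Lambda}) \leq \dim(\Lambda) < 1$. Since $g \in \mathcal{R}_{\varphi,\Lambda}$ is locally monotone along the stable and unstable directions, on each sufficiently refined rectangle $R_a$ one can choose $C^{1+\varepsilon}$ coordinates adapted to the invariant foliations in which $g$ becomes, up to a smooth bi-Lipschitz distortion controlled by the bounded distortion property, an affine function of the two coordinates with both partial derivatives nonzero. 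Therefore $g(\widetilde{\Lambda} \cap R_a)$ is bi-Lipschitz equivalent to an arithmetic sum $\pi_s(\widetilde{K}^s_a) + \lambda\,\pi_u(\widetilde{K}^u_a)$ for some $\lambda \neq 0$, where $\pi_s, \pi_u$ are smooth parametrizations of the two Cantor sets.

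The key input would then be Moreira's theorem on the Hausdorff dimension of sums of regular Cantor sets: for a generic (open and dense) configuration of a pair of $C^{1+\varepsilon}$-regular Cantor sets $(K_1, K_2)$ in the space of defining expanding maps, one has
$$\dim(K_1 + \lambda K_2) = \min\bigl(\dim(K_1) + \dim(K_2),\, 1\bigr) \quad \text{for every } \lambda \neq 0.$$
Applied in our setting, where the total dimension stays below $1$, this gives $\dim(g(\widetilde{\Lambda} \cap R_a)) = \dim(\widetilde{\Lambda})$, and a finite union over the rectangles completes the lower bound. To obtain a single residual $\mathcal{U}^{**}$ that works for every subhorseshoe $\widetilde{\Lambda}$ simultaneously, I would enumerate the countably many combinatorial types of subhorseshoes (parametrized by admissible sub-alphabets of the Markov coding) and take a countable intersection of the corresponding open dense perturbation conditions.

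The main obstacle is verifying that the generic configuration required by the sum-dimension theorem can actually be achieved by $C^\infty$-small conservative perturbations of $\varphi_0$, rather than merely by abstract perturbations of the defining expanding maps $(g_s, g_u)$. This is the technical heart of \cite{Mo}: one must formulate the generic configuration via the existence of a recurrent compact set for a scale-translation renormalization operator and show that this property is realized along a dense set of perturbations in $\mathrm{Diff}^\infty_\omega(M)$. The $C^{1+\varepsilon}$-regularity of the stable and unstable distributions (Lemma \ref{l.Lipschitz-directions}) is what guarantees that the Cantor pairs $(g_s, g_u)$ depend tamely on $\varphi$, so that the renormalization genericity is inherited from the ambient perturbation problem.
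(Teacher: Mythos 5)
The paper gives no proof of Theorem~\ref{t.dimension-formula}: it is quoted directly from \cite{Mo} (``the dimension formula in \cite{Mo}'') and used as a black box. So the comparison is between your sketch and the content of that reference, not against any argument in the text.

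Your overall architecture is sound and close to the actual result being imported: reduce the upper bound to the Lipschitz property, observe $\dim(\widetilde\Lambda)=\dim(\widetilde K^s)+\dim(\widetilde K^u)$ from the local product structure of the horseshoe, exploit the fact that $g\in\mathcal R_{\varphi,\Lambda}$ has nonvanishing stable and unstable partial derivatives, invoke a Moreira-type genericity theorem, and intersect over the countably many combinatorial types of subhorseshoes. You also correctly flag that the genuinely delicate point is realizing the required renormalization-genericity by $C^\infty$-small \emph{conservative} perturbations rather than by abstract perturbations of $(g_s,g_u)$.

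However, there is a concrete gap in the middle of the argument. You assert that, after adapting coordinates to the invariant foliations and invoking bounded distortion, $g$ becomes (up to bi-Lipschitz distortion) an affine function of the two coordinates, so that $g(\widetilde\Lambda\cap R_a)$ is bi-Lipschitz equivalent to an arithmetic sum $\pi_s(\widetilde K^s_a)+\lambda\,\pi_u(\widetilde K^u_a)$. This is not true for a general $g$ with nonvanishing partials: a function $g(x,y)$ can be written as $\phi(x)+\psi(y)$ in some $C^{1+\varepsilon}$ coordinates only if $\partial_x\partial_y g\equiv 0$, i.e.\ only if $g$ is already separable. Bounded distortion constrains the contraction ratios of the defining expanding maps; it says nothing about how $g$ mixes the two coordinates, and no smooth change of variables on a 2-dimensional domain with 1-dimensional target can straighten a generic $g$ to an affine map. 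The fact that the relevant preprint is titled about \emph{images of cartesian products by differentiable real maps}, rather than about arithmetic sums, is precisely the signal that this reduction does not hold literally: one must argue directly that $\dim g(K^s\times K^u)=\dim K^s+\dim K^u$ for nonseparable $g$, using that $g$ is well-approximated by affine maps only at small scales while controlling the accumulation of errors across scales via bounded distortion. That scale-by-scale argument, together with the renormalization/recurrent-compact-set formulation of the genericity and its realization inside $\mathrm{Diff}^\infty_\omega(M)$, is the actual content of \cite{Mo}, and your sketch elides it at the point where you pass to arithmetic sums.
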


\begin{remark} The Baire residual subset $\mathcal{U}^{**}$ consists of the diffeomorphisms $\varphi\in\mathcal{U}$ such that all ratios of the logarithms of the multipliers of all pairs of distinct periodic orbits in $\Lambda$ are irrational and all Birkhoff invariants of all periodic orbits in $\Lambda$ are non-zero. Indeed: 
\begin{itemize}
\item by Corollary 1 of \cite{Mo}, the conclusion of Theorem \ref{t.dimension-formula} whenever the ratios of the logarithms of the multipliers of periodic orbits are irrational and a certain property $(H\alpha)$ holds for some $\alpha>0$; 
\item as it is explained in Sections 4 and 9 of \cite{MoYo}, the property $(H\alpha)$ is satisfied whenever all Birkhoff invariants of all periodic points are non-zero. 
\end{itemize}
\end{remark}

At this point, we are ready to conclude the lower semicontinuity of the Hausdorff dimension across generic Lagrange and Markov dynamical spectra:

\begin{proposition}\label{p.Du-lower-sc} There exists a Baire residual subset $\mathcal{U}^{**}\subset \mathcal{U}$ such that, for any $f\in\mathcal{R}_{\varphi, \Lambda}$, the functions $t\mapsto D_u(t)$ and $t\mapsto D_s(t)$ are lower semicontinuous and 
$$D_s(t)+D_u(t) = 2 D_u(t) = \textrm{dim}(L_{\varphi, f}\cap(-\infty, t)) = \textrm{dim}(M_{\varphi, f}\cap (-\infty, t))$$
\end{proposition}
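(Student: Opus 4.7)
The strategy is twofold. I would first derive lower semicontinuity of $D_u$ and $D_s$ directly from Proposition \ref{p.tA}, and then squeeze $\textrm{dim}(L_{\varphi,f}\cap(-\infty,t))$ and $\textrm{dim}(M_{\varphi,f}\cap(-\infty,t))$ between $2D_u(t)$ from above (using that $f$ is Lipschitz plus a product-dimension estimate) and from below (by feeding the subshifts produced by Proposition \ref{p.tA} into Proposition \ref{p.horseshoe-extraction} and invoking Theorem \ref{t.dimension-formula}).

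For lower semicontinuity, fix $t_0$ with $D_u(t_0)>0$ and $\eta\in(0,1)$. Proposition \ref{p.tA} yields $\delta>0$ and a complete subshift $\Sigma(\mathcal{B}_u)\subset\Sigma_{t_0-\delta}$ with $\textrm{dim}(K^u(\Sigma(\mathcal{B}_u)))>(1-\eta)D_u(t_0)$. Since the family $\{K^u_t\}$ is nondecreasing in $t$, one has $K^u(\Sigma(\mathcal{B}_u))\subset K^u_{t_0-\delta}\subset K^u_t$ for all $t\geq t_0-\delta$; Remark \ref{r.Du} identifies $D_u(t)$ with the box-counting dimension of $K^u_t$, which dominates its Hausdorff dimension, so $D_u(t)\geq(1-\eta)D_u(t_0)$ for every $t>t_0-\delta$. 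Letting $\eta\to 0$ gives $\liminf_{t\to t_0}D_u(t)\geq D_u(t_0)$ (the case $D_u(t_0)=0$ is trivial). The argument for $D_s$ is symmetric, and combining with Proposition \ref{p.Du-upper-sc} yields continuity of both functions. Proposition \ref{p.tA} itself already asserts $D_u(t)=D_s(t)=d_u(t)=d_s(t)$.

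For the upper bound $\textrm{dim}(M_{\varphi,f}\cap(-\infty,t))\leq 2D_u(t)$, Remark \ref{r.L<M} gives $M_{\varphi,f}\cap(-\infty,t)\subset f(\Lambda_t)$; since $f$ is Lipschitz and each slice $\Lambda_t\cap R_a$ embeds bi-Lipschitz into $(K^s_t\cap I^s_a)\times(K^u_t\cap I^u_a)$ via the $C^{1+\varepsilon}$ Markov projections, the standard product-dimension inequality gives $\textrm{dim}(\Lambda_t)\leq\dim_H(K^s_t)+\overline{\dim}_B(K^u_t)\leq D_s(t)+D_u(t)=2D_u(t)$. For the matching lower bound on $\textrm{dim}(L_{\varphi,f}\cap(-\infty,t))$, pick $\eta,\varepsilon>0$. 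Proposition \ref{p.tA} produces a subshift $\Sigma(\mathcal{B}_u)\subset\Sigma_{t-\delta}$ whose subhorseshoe satisfies $\textrm{dim}(\Lambda(\Sigma(\mathcal{B}_u)))=\textrm{dim}(K^u(\Sigma(\mathcal{B}_u)))+\textrm{dim}(K^s(\Sigma(\mathcal{B}_u^T)))>2(1-\eta)D_u(t)$, and Proposition \ref{p.horseshoe-extraction} then extracts a subhorseshoe $\Lambda(\mathcal{B}_u,\varepsilon)$, an integer $j$, a rectangle $R(\mathcal{B}_u,\varepsilon)$, and a foliation-preserving diffeomorphism $A$ such that
$$f(\varphi^{j}(A(\Lambda(\mathcal{B}_u,\varepsilon)\cap R(\mathcal{B}_u,\varepsilon))))\subset\ell_{\varphi,f}(\Lambda(\Sigma(\mathcal{B}_u)))\subset L_{\varphi,f}\cap(-\infty,t-\delta]$$
and $\textrm{dim}(\Lambda(\mathcal{B}_u,\varepsilon))\geq(1-\varepsilon)\textrm{dim}(\Lambda(\Sigma(\mathcal{B}_u)))$. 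Because $A$ respects the local stable/unstable foliations and $\varphi$ preserves $E^s,E^u$, the chain rule shows that $g:=f\circ\varphi^{j}\circ A$ inherits the transversality property defining $\mathcal{R}_{\varphi,\Lambda}$ from $f$ near $R(\mathcal{B}_u,\varepsilon)$; Theorem \ref{t.dimension-formula} (valid on a Baire residual $\mathcal{U}^{**}\subset\mathcal{U}$) then yields
$$\textrm{dim}\bigl(f(\varphi^{j}(A(\Lambda(\mathcal{B}_u,\varepsilon)\cap R(\mathcal{B}_u,\varepsilon))))\bigr)=\textrm{dim}(\Lambda(\mathcal{B}_u,\varepsilon))\geq 2(1-\eta)(1-\varepsilon)D_u(t).$$
Letting $\eta,\varepsilon\to 0$ closes the squeeze.

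The main obstacle I anticipate is the application of Theorem \ref{t.dimension-formula} to the only-locally-defined function $g=f\circ\varphi^{j}\circ A$: one must either extend $g$ to a genuine global element of $\mathcal{R}_{\varphi,\Lambda}$ (e.g.\ via a partition of unity as in the proof of Proposition \ref{p.R-generic}) without changing its action on $\Lambda(\mathcal{B}_u,\varepsilon)\cap R(\mathcal{B}_u,\varepsilon)$, or appeal to a localized form of the generic dimension formula. A secondary delicate point is the product-dimension upper bound $\textrm{dim}(\Lambda_t)\leq D_s(t)+D_u(t)$, for which Remark \ref{r.Du} is essential in order to pass from Hausdorff to box-counting dimension on the $K^u_t$ factor.
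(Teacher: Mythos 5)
Your proof plan is correct and relies on the same key ingredients as the paper's own argument: Proposition \ref{p.tA} to approximate $\Lambda_{t-\delta}$ by a subhorseshoe $\Lambda(\Sigma(\mathcal{B}))$ of nearly full dimension, Proposition \ref{p.horseshoe-extraction} together with Theorem \ref{t.dimension-formula} to push that dimension into $L_{\varphi,f}\cap(-\infty,t-\delta)$, and the inclusions $L_{\varphi,f}\subset M_{\varphi,f}$ (Remark \ref{r.L<M}) together with $\textrm{dim}(\Lambda_t)\leq D_s(t)+D_u(t)$ for the upper estimate. The only organizational difference is that you derive lower semicontinuity of $D_u,D_s$ directly from the inclusion $K^u(\Sigma(\mathcal{B}_u))\subset K^u_{t_0-\delta}\subset K^u_t$ plus Remark \ref{r.Du}, whereas the paper reads both the semicontinuity (via monotonicity of $D_u$) and the dimension identity off the single chain
$$2(1-\eta)D_u(t)\leq\textrm{dim}(\Lambda(\Sigma(\mathcal{B})))\leq\textrm{dim}(L_{\varphi,f}\cap(-\infty,t-\delta))\leq\textrm{dim}(M_{\varphi,f}\cap(-\infty,t-\delta))\leq 2D_u(t-\delta);$$
both versions work and yours is perhaps a bit cleaner on the semicontinuity side at the cost of mild repetition. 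Your flagged concern about applying Theorem \ref{t.dimension-formula} to the only-locally-defined $g=f\circ\varphi^{j}\circ A$ is legitimate, but it affects the paper's terse invocation just as much; the intended resolution is not to extend $g$ globally into $\mathcal{R}_{\varphi,\Lambda}$ but rather to note that $\varphi^{j}(A(\Lambda(\mathcal{B},\varepsilon)\cap R(\mathcal{B},\varepsilon)))$ is, because $A$ and $\varphi$ respect the stable/unstable foliations, a bi-Lipschitz copy inside $\Lambda$ of a cartesian product of regular Cantor sets, so that the underlying result of \cite{Mo} (which is about images of such products under maps with the transversality property that $f\in\mathcal{R}_{\varphi,\Lambda}$ enjoys) applies directly with the original $f$.
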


\begin{proof} Let $t\in \mathbb{R}$ with $D_u(t)>0$ and $\eta>0$. By Proposition \ref{p.tA}, we can find $\delta>0$ and a complete subshift $\Sigma(\mathcal{B})\subset\Sigma_{t-\delta}$ on a finite alphabet $\mathcal{B}$ on finite words of $\mathcal{A}$ such that 
$$(1-\eta)(D_s(t)+D_u(t))=2(1-\eta)D_u(t)\leq \textrm{dim}(\Lambda(\Sigma(\mathcal{B})))$$
By Proposition \ref{p.horseshoe-extraction} and Theorem \ref{t.dimension-formula}, we see that 
$$\textrm{dim}(\Lambda(\Sigma(\mathcal{B})))\leq \textrm{dim}(\ell_{\varphi, f}(\Lambda(\Sigma(\mathcal{B})))$$ 

It follows that 
\begin{eqnarray*}
2(1-\eta)D_u(t)&\leq& \textrm{dim}(\Lambda(\Sigma(\mathcal{B})))\leq \textrm{dim}(\ell_{\varphi, f}(\Lambda(\Sigma(\mathcal{B})))) \\ 
&\leq& \textrm{dim}(L_{\varphi, f}\cap(-\infty, t-\delta))\leq \textrm{dim}(M_{\varphi, f}\cap(-\infty, t-\delta)) \\ 
&\leq& \textrm{dim}(f(\Lambda_{t-\delta})) \leq \textrm{dim}(\Lambda_{t-\delta}) \leq 2D_u(t-\delta)
\end{eqnarray*}
Since $\eta>0$ is arbitrary, this proves the proposition.
\end{proof}

\subsection{End of the proof of Theorem \ref{t.A}} Let $\varphi\in\mathcal{U}^{**}$ and $f\in\mathcal{R}_{\varphi, \Lambda}$: note that $\mathcal{U}^{**}$ is residual by Theorem \ref{t.dimension-formula} and $\mathcal{R}_{\varphi,\Lambda}$ is $C^r$-open and dense by Proposition \ref{p.R-generic}. 

By Propositions \ref{p.Du-upper-sc} and \ref{p.Du-lower-sc}, the function 
$$t\mapsto D_s(t)=D_u(t)=\frac{1}{2}\textrm{dim}(L_{\varphi, f}\cap(-\infty, t))=\frac{1}{2}\textrm{dim}(M_{\varphi, f}\cap (-\infty, t))$$
is continuous. 

Since Proposition \ref{p.tA} says that $d_s(t)=D_s(t)=D_u(t)=d_u(t)$ for all $t\in\mathbb{R}$, the proof of Theorem \ref{t.A} is complete.

\end{document}